\theoremstyle{plain}
\newtheorem{thm}{\protect\theoremname}
\theoremstyle{definition}
\newtheorem{example}[thm]{\protect\examplename}
\theoremstyle{plain}
\newtheorem{lem}[thm]{\protect\lemmaname}
\theoremstyle{plain}
\newtheorem{prop}[thm]{\protect\propositionname}
\theoremstyle{plain}
\newtheorem{cor}[thm]{\protect\corollaryname}
\newcommand{\xyR}[1]{%
\makeatletter
\xydef@\xymatrixrowsep@{#1}
\makeatother
} 
\newcommand{\xyC}[1]{%
\makeatletter
\xydef@\xymatrixcolsep@{#1}
\makeatother
} 
\DeclareMathOperator*{\medwedge}{{\textstyle\bigwedge}}
\providecommand{\examplename}{Example}
\providecommand{\lemmaname}{Lemma}
\providecommand{\propositionname}{Proposition}
\providecommand{\theoremname}{Theorem}
\providecommand{\corollaryname}{Corollary}
\providecommand{\corollaryname}{Corollary}
\providecommand{\examplename}{Example}
\providecommand{\lemmaname}{Lemma}
\providecommand{\propositionname}{Proposition}
\providecommand{\theoremname}{Theorem}
\begin{document}
\title{Free-lattice functors weakly preserve epi-pullbacks}
\author{H.\,Peter Gumm, Ralph Freese}
\maketitle
\begin{abstract}
Suppose $p(x,y,z)$ and $q(x,y,z)$ are terms. If there is a common
``ancestor'' term $s(z_{1},z_{2},z_{3},z_{4})$ specializing to
$p$ and $q$ through identifying some variables 
\begin{align*}
p(x,y,z) & \approx s(x,y,z,z)\\
q(x,y,z) & \approx s(x,x,y,z),
\end{align*}
then the equation 
\[
p(x,x,z)\approx q(x,z,z)
\]
is trivially obtained by syntactic unification of $s(x,y,z,z)$ with
$s(x,x,y,z).$

In this note we show that for lattice terms, and more generally for
terms of lattice-ordered algebras, the converse is true, too. Given
terms $p,q,$ and an equation 
\begin{equation}
p(u_{1},\ldots,u_{m})\approx q(v_{1},\ldots,v_{n})\label{eq:p_eq_q}
\end{equation}
where $\{u_{1},\ldots,u_{m}\}=\{v_{1},\ldots,v_{n}\},$ there is always
an ``ancestor term'' $s(z_{1},\ldots,z_{r})$ such that $p(x_{1},\ldots,x_{m})$
and $q(y_{1},\ldots,y_{n})$ arise as substitution instances of $s,$
whose unification results in the original equation (\ref{eq:p_eq_q}). 
\end{abstract}
\begin{quote}
In category theoretic terms the above proposition, when restricted
to lattices, has a much more concise formulation:
\end{quote}
\begin{quotation}
\emph{Free-lattice functors weakly preserves pullbacks} \emph{of epis}.
\end{quotation}

\section{Introduction}

The motivation of this study arose from coalgebra. When studying $F$-coalgebras
for a type functor $F$, the limit preservation properties of $F$
are known to exert a crucial influence on the structure theory of
the class of all $F$-coalgebras. In particular, \emph{weak preservation
of pullbacks} is a property which many familiar $Set$-endofunctors
enjoy. However, there are notable exceptions, such as the bounded
finite powerset functor or the neighborhood functor. Therefore, weak
preservation of pullbacks was split into two easier but separate conditions,
namely \emph{weak preservation of kernel pairs} and (weak)\emph{ preservation
of preimages}. It could be shown that these two simpler conditions
combine to be equivalent to weak preservation of pullbacks. Later
it was also found that the first condition, weak preservation of kernel
pairs, is equivalent to weak preservation of pullbacks of epis.

From a universal algebraic point of view it is of interest to study
the functors $F_{\mathcal{V}}$ which, for a given variety of algebras
$\text{\ensuremath{\mathcal{V}}},$ send a set $X$ to the free algebra
$F_{\mathcal{V}}(X)$ and a map $\sigma:X\to Y$ to its homomorphic
extension $\bar{\sigma}:F_{\mathcal{V}}(X)\to F_{\mathcal{V}}(Y).$
Note that for an arbitrary term $p(x_{1},\ldots,x_{n})$ we have that
\begin{equation}
\bar{\sigma}\,p(x_{1},\ldots,x_{n})=p(\sigma x_{1},\ldots,\sigma x_{n}),\label{eq:homomorphic extension}
\end{equation}
where on the left hand side of the equation $p$ is understood to
be evaluated in $F_{\mathcal{V}}(X)$ and on the right hand side in
$F_{\mathcal{V}}(Y).$

In this context, weak preservation of kernel pairs translates into
an interesting algebraic condition, asserting that given an equation
\begin{equation}
p(u_{1},\ldots,u_{m})\approx q(v_{1},\ldots,v_{n}),\label{eq:p=00003D00003D00003Dq}
\end{equation}
where $\{u_{1},\ldots,u_{m}\}=\{v_{1},\ldots,v_{n}\},$ then the terms
$p$ and $q$ are in fact both obtained from a common ``ancestor''
term $s$ by identifying some of its variables, so that the equality
(\ref{eq:p=00003D00003D00003Dq}) trivially follows from this representation.
\begin{example}
\label{exa:pxxy_eq_qxyy}Assume that $F_{\mathcal{V}}$ weakly preserves
kernel pairs, then for any $\mathcal{V}$-equation 
\begin{equation}
p(x,x,y)\approx q(x,y,y)\label{eq:p(xxy)=00003D00003D00003Dq(xyy)}
\end{equation}
there exists a quaternary term $s$ such that 
\begin{align*}
p(x,y,z) & \approx s(x,y,z,z)\\
q(x,y,z) & \approx s(x,x,y,z).
\end{align*}
This representation then trivially entails (\ref{eq:p(xxy)=00003D00003D00003Dq(xyy)}),
since the most general unifier of $s(x,y,z,z)$ with $s(x,x,y,z)$
is $s(x,x,z,z),$ resulting in the original equation
\[
p(x,x,z)\approx s(x,x,z,z)\approx q(x,z,z).
\]
\end{example}

The ``ancestor condition'' in the previous example has been introduced
in \cite{Gumm20b}. Applying it to the description of $n-$permutable
varieties given by Hagemann and Mitschke \cite{Hagemann1973}, we
could show that for an $n-$permutable variety $\mathcal{V},$ the
functor $F_{\mathcal{V}}$ weakly preserves kernel pairs if and only
if $\mathcal{V}$ is congruence permutable, which in turn holds, according
to Mal'cev~\cite{Malcev1954}, if and only if there exists a term
$m(x,y,z)$ such that the equations 
\begin{align}
m(x,y,y) & \approx x\label{eq:malcev1}\\
m(x,x,y) & \approx y\label{eq:malcev2}
\end{align}
are satisfied.

In this note we are going to show that for any variety $\mathcal{L}$
of lattices, or more generally, of lattice ordered universal algebras,
the free algebra functor $F_{\mathcal{L}}$ weakly preserves kernel
pairs. Therefore, any pair of terms $p,q$ which combine to a valid
equation (\ref{eq:p=00003D00003D00003Dq}) are instances of a common
ancestor term $s,$ so that the equation (\ref{eq:p=00003D00003D00003Dq})
trivially results from instantiations of $s$ resulting in a syntactically
identical term.

From the mentioned paper \cite{Gumm20b} it follows that for no congruence
modular variety $\mathcal{V}$ the functor $F_{\mathcal{V}}$ preserves
preimages. Hence the functor $F_{\mathcal{L}}$ studied in this note
will not preserve preimages, which shows that the variable condition
$\{u_{1},\ldots,u_{m}\}=\{v_{1},\ldots,v_{n}\}$ cannot be dropped.

\section{Preliminaries}

Most of the time we shall omit parentheses when applying unary functions
to arguments, and we assume that application associates to the right,
so we write $fx$ for $f(x)$ and $fgx$ for $f(g(x)).$

For a map $f:X\to Y$ we denote the image of $f$ by $f(X)$ or by
$\text{im}\,f$ and its preimage by $f^{-1}(Y).$ The \emph{kernel}
of $f$ is 
\[
\ker f:=\{(x_{1},x_{2})\in X\times X\mid fx_{1}=fx_{2}\}.
\]

\begin{lem}
\label{lem:diagram_lemma}Given a surjective map $f:X\twoheadrightarrow Y$
and an arbitrary map $g:X\to Z$, then there exists a (necessarily
unique) map $h:Y\to Z$ with $h\circ f=g$ if and only if $\ker f\subseteq\ker g.$
\[
\xymatrix{X\ar@{->>}[r]^{f}\ar[dr]_{g} & Y\ar@{-->}[d]^{h}\\
 & Z
}
\]
\end{lem}

Every surjective map $f:X\twoheadrightarrow Y$ is \emph{right invertible},
i.e. has a \emph{right inverse} which we shall denote by $f^{-}$
and which obeys the equation $f\circ f^{-}=id_{Y}.$ This general
statement is equivalent to the axiom of choice, however we shall need
it here only for finite sets $X$ and $Y.$

\section{Weak preservation of pullbacks}

Given set maps $\alpha:X\to Z$ and $\beta:Y\to Z$, the \emph{pullback}
of $\alpha$ and $\beta$ is a triple $(P,\pi_{1},\pi_{2})$ consisting
of an object $P$ together with maps $\pi_{1}:P\to X$ and $\pi_{2}:P\to Y$
such that
\begin{itemize}
\item $\alpha\circ\pi_{1}=\beta\circ\pi_{2},$ and
\item for every \emph{``competitor}'', i.e. for every other object $Q$
with maps $\eta_{1}:Q\to X,$ $\eta_{2}:Q\to Y$ also satisfying $\alpha\circ\eta_{1}=\beta\circ\eta_{2}$,
there is a \emph{unique} map $d:Q\to P$ such that $\eta_{1}=\pi_{1}\circ d$
and $\eta_{2}=\pi_{2}\circ d.$ 
\[
\xymatrix{ & X\ar[r]^{\alpha} & Z\\
 & P\ar[r]^{\pi_{2}}\ar[u]_{\pi_{1}} & Y\ar[u]_{\beta}\\
Q\ar@/_{1pc}/[rru]_{\eta_{2}}\ar@/^{1pc}/[uur]^{\eta_{1}}\ar@{..>}[ur]|d
}
\]
\end{itemize}
By dropping the \emph{uniqueness} requirement, one obtains the definition
of a \emph{weak} \emph{pullback}.

In the category $Set$ of sets and mappings, the pullback of two maps
$\alpha$ and $\beta$ is, up to isomorphism, given by 
\[
Pb(\alpha,\beta)=\{(x,y)\in X\times Y\mid\alpha x=\beta y\}
\]
with $\pi_{1}$ and $\pi_{2}$ being the coordinate projections.
\begin{itemize}
\item If $\alpha=\beta$ then $Pb(\alpha,\beta)$ is just $\ker\alpha$,
the kernel of $\alpha,$ and $(\ker\alpha,\pi_{1},\pi_{2})$ is called
a \emph{kernel pair}.
\item If $\beta$ is injective, then $Pb(\alpha,\beta)\cong\alpha^{-1}(\beta(Y)),$
hence such a pullback is called a \emph{preimage}.
\end{itemize}
Weak pullbacks are always of the shape $(Q,\eta_{1},\eta_{2})$ where
$d:Q\to Pb(\alpha,\beta)$ is right invertible and $\eta_{i}=\pi_{i}\circ d$
for $i=1,2.$

We say that a functor $F$ \emph{weakly preserves pullbacks} if applying
$F$ to a pullback diagram results in a weak pullback diagram. $F$
is said to\emph{ preserve weak pullbacks}, if $F$ transforms weak
pullback diagrams into weak pullback diagrams.

Fortunately, it is easy to see that a functor\emph{ preserves weak
pullbacks} if and only if it \emph{weakly preserves pullbacks}, see
e.g. \cite{Gumm01}, and that a functor \emph{preserves }preimages
if and only if it \emph{weakly preserves} preimages.

For $Set$-endofunctors $F$, weak preservation of pullbacks can be
checked elementwise:
\begin{prop}
\label{prop:pointwise}A $Set$-functor $F$ weakly preserves the
pullback of $\alpha:X\to Z$ and $\beta:Y\to Z$ iff for any $p\in F(X)$
and $q\in F(Y)$ with $r:=(F\alpha)p=(F\beta)q$ there exists some
$s\in F(Pb(\alpha,\beta))$ such that $(F\pi_{1})s=p$ and $(F\pi_{2})s=q.$
\[
\xymatrix{\ar@{}[r]|(0.6){p\,\,\in} & F(X)\ar@{->}[r]^{F\alpha} & F(Z) & \ar@{}[l]|{\ni\,\,r}\\
\ar@{}[r]|(0.4){s\,\,\in} & F(Pb(\alpha,\beta))\ar[r]\sb(0.6){F\pi_{2}}\ar[u]\sp(0.5){F\pi_{1}} & F(Y)\ar[u]_{F\beta} & \ar@{}[l]|{\ni\,\,q}
}
\]
\end{prop}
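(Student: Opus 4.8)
The plan is to prove the ``pointwise'' characterization by unwinding the universal property of the pullback and exploiting the fact that every surjection of sets splits. The statement is a biconditional, so I would treat the two directions separately.

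For the forward direction, suppose $F$ weakly preserves the pullback of $\alpha$ and $\beta$, and take $p\in F(X)$, $q\in F(Y)$ with $(F\alpha)p=(F\beta)q=:r$. The idea is to build a competitor for the pullback $(Pb(\alpha,\beta),\pi_1,\pi_2)$ whose apex is a one-point set. Concretely, let $Q=\{*\}$ be a singleton, and define maps $\eta_1\colon Q\to X$ and $\eta_2\colon Q\to Y$ by $\eta_1(*)=x_0$ and $\eta_2(*)=y_0$ for suitably chosen base points; the cleaner route, however, is to apply $F$ to the defining square and use the element $p$, $q$ directly. I would set $Q=F(\{*\})$ is not quite right either, so the correct move is to observe that an element $p\in F(X)$ is the same as a natural transformation from the functor $\mathrm{Hom}(\{*\},-)$ is overkill; instead I would note that weak preservation of the pullback means $(F(Pb(\alpha,\beta)),F\pi_1,F\pi_2)$ is a weak pullback of $F\alpha$ and $F\beta$ in $Set$, and then feed the competitor $(\{*\},\hat p,\hat q)$, where $\hat p,\hat q$ are the maps from the singleton picking out $p$ and $q$, into the weak-pullback property. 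The compatibility condition $F\alpha\circ\hat p=F\beta\circ\hat q$ is exactly $(F\alpha)p=(F\beta)q=r$, so the weak-pullback property yields a (not necessarily unique) mediating map $d\colon\{*\}\to F(Pb(\alpha,\beta))$; setting $s:=d(*)$ gives $(F\pi_1)s=p$ and $(F\pi_2)s=q$ as required.

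For the converse, assume the elementwise condition and let $(Q,\eta_1,\eta_2)$ be any competitor for the $F$-image square, i.e.\ maps in $Set$ with $F\alpha\circ\eta_1=F\beta\circ\eta_2$. I must produce a mediating $d\colon Q\to F(Pb(\alpha,\beta))$ with $F\pi_i\circ d=\eta_i$. I would define $d$ pointwise: for each $a\in Q$, set $p_a:=\eta_1(a)$ and $q_a:=\eta_2(a)$, which satisfy $(F\alpha)p_a=(F\beta)q_a$ by the compatibility hypothesis, and then invoke the elementwise condition to choose some $s_a\in F(Pb(\alpha,\beta))$ with $(F\pi_1)s_a=p_a$ and $(F\pi_2)s_a=q_a$; putting $d(a):=s_a$ yields a map with the required factorization. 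This direction uses a choice of $s_a$ for each $a$, which is fine since we only need existence of a weak mediating map, not uniqueness.

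I expect the main obstacle to be the bookkeeping of ``elements as maps from a singleton.'' The crux is the standard but slightly fiddly identification between an element $p\in F(X)$ and a morphism $\{*\}\to F(X)$, together with the observation that $F\alpha$ applied to the element $p$ equals the composite of $F\alpha$ with that morphism. Once this correspondence is set up carefully, both directions are essentially formal consequences of the definition of a weak pullback in $Set$ combined with the description $Pb(\alpha,\beta)=\{(x,y)\mid\alpha x=\beta y\}$ given above. No deep lattice-theoretic input is needed here; this proposition is a purely categorical reformulation that will later be specialized to $F=F_{\mathcal{L}}$.
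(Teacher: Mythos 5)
Your proof is correct. The paper itself states this proposition without proof (it is the standard elementwise characterization of weak pullbacks for $Set$-functors), and your argument is exactly the expected one: the forward direction by feeding the singleton competitor $(\{*\},\hat p,\hat q)$ into the weak-pullback property of the $F$-image square, and the converse by choosing a preimage $s_a$ for each point $a$ of an arbitrary competitor, which is legitimate since only existence, not uniqueness, of the mediating map is required (and the paper assumes choice anyway, e.g.\ for right inverses of surjections). The only criticism is stylistic: the false starts in your forward direction (``is not quite right either,'' ``is overkill'') should be deleted, leaving the clean argument you eventually settle on.
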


Since we are only concerned with finitary operations, free-algebra
functors $F_{\mathcal{V}}$ happen to be \emph{finitary}. This means
that given a set $X$ and $p\in F(X)$, there is some finite subset
$X_{0}\subseteq X$ such that $p\in F(X_{0}).$ The following easy
lemma allows us to restrict our consideration to finite sets and maps
between them:
\begin{lem}
If $F$ is finitary, and $F$ weakly preserves pullbacks of maps between
finite sets, then it weakly preserves all pullbacks.
\end{lem}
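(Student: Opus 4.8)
The plan is to verify the pointwise criterion of Proposition~\ref{prop:pointwise} for an arbitrary pullback, reducing each instance to the finite case by exploiting finitarity. So fix maps $\alpha:X\to Z$ and $\beta:Y\to Z$, form $P:=Pb(\alpha,\beta)$ with projections $\pi_{1},\pi_{2}$, and take any $p\in F(X)$ and $q\in F(Y)$ with $(F\alpha)p=(F\beta)q=:r$. I must produce $s\in F(P)$ with $(F\pi_{1})s=p$ and $(F\pi_{2})s=q$.

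First I would shrink everything to finite pieces. Since $F$ is finitary, there are finite subsets $X_{0}\subseteq X$ and $Y_{0}\subseteq Y$ and elements $p_{0}\in F(X_{0})$, $q_{0}\in F(Y_{0})$ mapping to $p$ and $q$ under the inclusions $\iota:X_{0}\hookrightarrow X$ and $\kappa:Y_{0}\hookrightarrow Y$, i.e.\ $p=(F\iota)p_{0}$ and $q=(F\kappa)q_{0}$. Choose a finite set $Z_{0}$ with $\alpha(X_{0})\cup\beta(Y_{0})\subseteq Z_{0}\subseteq Z$, write $\lambda:Z_{0}\hookrightarrow Z$ for the inclusion, and let $\alpha_{0}:X_{0}\to Z_{0}$, $\beta_{0}:Y_{0}\to Z_{0}$ be the corestrictions of $\alpha,\beta$, so that $\alpha\circ\iota=\lambda\circ\alpha_{0}$ and $\beta\circ\kappa=\lambda\circ\beta_{0}$.

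The crucial step is to show that $p_{0}$ and $q_{0}$ already have a common image in $F(Z_{0})$, not merely in $F(Z)$. By functoriality, $(F\lambda)\bigl((F\alpha_{0})p_{0}\bigr)=(F\alpha)p=r=(F\beta)q=(F\lambda)\bigl((F\beta_{0})q_{0}\bigr)$, so it suffices to know that $F\lambda$ is injective. This is exactly where the argument could fail for a general functor, and it is the main point to secure: any inclusion of a nonempty subset is a split monomorphism in $Set$ (pick a retraction $\rho:Z\to Z_{0}$ with $\rho\circ\lambda=id_{Z_{0}}$), whence $F\rho\circ F\lambda=id$ and $F\lambda$ is injective; the degenerate case $Z_{0}=\emptyset$ forces all the data to be empty and is handled separately. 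Thus $(F\alpha_{0})p_{0}=(F\beta_{0})q_{0}=:r_{0}$ in $F(Z_{0})$.

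Now I can invoke the hypothesis. Applying Proposition~\ref{prop:pointwise} to the pullback $P_{0}:=Pb(\alpha_{0},\beta_{0})$ of the finite maps $\alpha_{0},\beta_{0}$ yields $s_{0}\in F(P_{0})$ with $(F\pi_{1}^{0})s_{0}=p_{0}$ and $(F\pi_{2}^{0})s_{0}=q_{0}$, where $\pi_{1}^{0},\pi_{2}^{0}$ are the projections of $P_{0}$. Since every $(x,y)\in P_{0}$ satisfies $\alpha x=\beta y$ in $Z$, there is a canonical inclusion $j:P_{0}\hookrightarrow P$ compatible with the projections, namely $\pi_{1}\circ j=\iota\circ\pi_{1}^{0}$ and $\pi_{2}\circ j=\kappa\circ\pi_{2}^{0}$. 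Setting $s:=(Fj)s_{0}\in F(P)$ and chasing the diagram gives $(F\pi_{1})s=(F\iota)(F\pi_{1}^{0})s_{0}=(F\iota)p_{0}=p$, and likewise $(F\pi_{2})s=q$, which is what the pointwise criterion demands. I expect the only real obstacle to be the injectivity of $F\lambda$ addressed above; the remaining manipulations are routine naturality chases.
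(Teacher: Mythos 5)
Your proof is correct. The paper offers no proof of this lemma at all---it is stated as an ``easy lemma'' immediately after the definition of \emph{finitary}---so there is no argument of the authors' to compare yours against; what you give is the natural reduction, and it works. In particular, you isolate the one step that is not a routine diagram chase: descending the equality $(F\alpha)p=(F\beta)q$ from $F(Z)$ to $F(Z_{0})$, which requires $F\lambda$ to be injective. Your justification (a nonempty subset inclusion is a split mono in $Set$, and any functor preserves split monos, hence sends them to injections) is exactly right, and the exception you flag is genuine, since $Set$-functors need not send maps out of $\emptyset$ to injective maps. One small sharpening of your phrasing: $Z_{0}=\emptyset$ is never \emph{forced} unless $Z=\emptyset$ (whenever $Z\neq\emptyset$ you may simply enlarge $Z_{0}$ by one point), and when $Z=\emptyset$ the existence of $\alpha$ and $\beta$ gives $X=Y=\emptyset$, so the whole cospan already consists of maps between finite sets and the hypothesis applies verbatim. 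With that reading, every step---the choice of $X_{0},Y_{0},Z_{0}$, the corestrictions, the application of Proposition~\ref{prop:pointwise} in both directions, and the final transport along $j:P_{0}\hookrightarrow P$---checks out.
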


Weak preservation of pullbacks can be decomposed into two simpler
preservation conditions. We recall from \cite{GS05}:
\begin{lem}
A functor $F$ weakly preserves pullbacks iff $F$ weakly preserves
kernel pairs and preimages.
\end{lem}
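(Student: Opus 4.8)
The forward direction is immediate: a kernel pair is the pullback $Pb(\alpha,\alpha)$ and a preimage is a pullback $Pb(\alpha,\beta)$ with $\beta$ injective, so a functor weakly preserving \emph{all} pullbacks certainly weakly preserves these two special shapes. The content is the converse. By the reduction to finite sets established above I may assume every set finite, so that every surjection has a right inverse, and I shall verify the pointwise criterion of Proposition~\ref{prop:pointwise}. So fix $\alpha:X\to Z$, $\beta:Y\to Z$ and elements $p\in F(X)$, $q\in F(Y)$ with $(F\alpha)p=(F\beta)q=:r$, seeking an $s$ over $p$ and $q$.

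The plan is to split the task into a routine \emph{preimage part}, which trims $\alpha$ and $\beta$ down to two surjections onto their common image, and an \emph{epi part} that does the real work. For the first part I factor $\alpha=m_A\circ a$ and $\beta=m_B\circ b$ into surjections $a:X\twoheadrightarrow A$, $b:Y\twoheadrightarrow B$ followed by the inclusions $m_A:A\hookrightarrow Z$, $m_B:B\hookrightarrow Z$ of the images $A=\alpha(X)$, $B=\beta(Y)$. Since $r$ lies in both images, weak preservation of the preimage $Pb(m_A,m_B)=A\cap B$ yields a $t\in F(C)$, $C:=A\cap B$, lying over $(Fa)p$ and $(Fb)q$; applying it once more to the epi--mono pullbacks $a^{-1}(C)$ and $b^{-1}(C)$ (which are preimages, being pullbacks of the surjections $a,b$ against the inclusion $C\hookrightarrow A$, resp. $C\hookrightarrow B$) yields $p'\in F(X_0)$ and $q'\in F(Y_0)$ over this common $t$, where $X_0:=\alpha^{-1}(B)$, $Y_0:=\beta^{-1}(A)$. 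Since $Pb(\alpha,\beta)$ equals $Pb(a_0,b_0)$ for the corestricted surjections $a_0:X_0\twoheadrightarrow C$, $b_0:Y_0\twoheadrightarrow C$, and the inclusions $X_0\hookrightarrow X$, $Y_0\hookrightarrow Y$ recover $p,q$ from $p',q'$, everything is reduced to the epi case.

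The heart of the argument, and the step I expect to be the main obstacle, is to show that the pullback $P:=Pb(a_0,b_0)$ of two surjections onto $C$ is weakly preserved using \emph{only} kernel pairs. The idea is to manufacture the missing element by gluing two ``half-solutions'' along a single kernel pair. Choosing right inverses $a_0^-,b_0^-$, the sections $\sigma x=(x,b_0^-a_0 x)$ and $\tau y=(a_0^-b_0 y,y)$ of the two projections $\pi_1,\pi_2$ of $P$ give $s_1:=(F\sigma)p'$ and $s_2:=(F\tau)q'$ in $F(P)$ with $(F\pi_1)s_1=p'$ and $(F\pi_2)s_2=q'$ --- each correct in one coordinate only. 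The key observation is that along the common surjection $c:=a_0\pi_1=b_0\pi_2:P\twoheadrightarrow C$ one computes $(Fc)s_1=t=(Fc)s_2$, so $s_1$ and $s_2$ become comparable: weak preservation of the kernel pair $\ker c$, with projections $\gamma_1,\gamma_2$, supplies a $\zeta\in F(\ker c)$ with $(F\gamma_1)\zeta=s_1$ and $(F\gamma_2)\zeta=s_2$. Feeding $\zeta$ through $\delta:\ker c\to P$, $\delta\bigl((x,y),(x',y')\bigr)=(x,y')$ --- which is well defined since $c(x,y)=c(x',y')$ forces $a_0x=b_0y'$ --- produces $s:=(F\delta)\zeta$, and because $\pi_1\circ\delta=\pi_1\circ\gamma_1$ and $\pi_2\circ\delta=\pi_2\circ\gamma_2$ hold on the nose, $s$ satisfies $(F\pi_1)s=p'$ and $(F\pi_2)s=q'$. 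Composing with the inclusions then yields the element over $p$ and $q$ demanded by Proposition~\ref{prop:pointwise}. The only delicate point is the construction of $\delta$ together with these two identities; once it is in place the converse is complete, and as a by-product the epi part reproves that weak preservation of kernel pairs already forces weak preservation of all pullbacks of epis.
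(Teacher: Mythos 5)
Your argument is essentially correct, but it cannot be compared against a proof in the paper: the paper does not prove this lemma at all, it simply recalls it from \cite{GS05}, just as it later cites \cite{henkel2010,Gumm20b} for the equivalence of kernel-pair preservation with epi-pullback preservation. So what you have written is a self-contained proof of a statement the paper treats as imported. The structure is sound: the forward direction is indeed trivial; the ``preimage part'' (factoring $\alpha=m_A\circ a$, $\beta=m_B\circ b$, intersecting the images via the preimage $Pb(m_A,m_B)\cong A\cap B$, then restricting along the preimages $a^{-1}(C)$, $b^{-1}(C)$) correctly reduces everything to two surjections onto a common codomain $C$, since $Pb(\alpha,\beta)=Pb(a_0,b_0)$ and the inclusions transport the found element back to one over $p$ and $q$. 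The ``epi part'' is in essence the Gumm--Henkel argument behind the equivalence of conditions (1) and (2) of Lemma~\ref{lem:kernel pairs}: I checked the details and they work --- $\sigma$ and $\tau$ are sections of $\pi_1$, $\pi_2$; since $c\circ\sigma=a_0$ and $c\circ\tau=b_0$ one gets $(Fc)s_1=(Fa_0)p'=t=(Fb_0)q'=(Fc)s_2$; the map $\delta\bigl((x,y),(x',y')\bigr)=(x,y')$ is well defined on $\ker c$ because $a_0x=a_0x'=b_0y'$ there; and the identities $\pi_1\circ\delta=\pi_1\circ\gamma_1$, $\pi_2\circ\delta=\pi_2\circ\gamma_2$ hold on the nose, so $s=(F\delta)\zeta$ projects to $p'$ and $q'$ as required. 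Thus your proof simultaneously establishes the decomposition lemma and (as you note) the implication ``kernel pairs $\Rightarrow$ epi-pullbacks'', both of which the paper leaves to the literature.

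One point needs repair. Your opening appeal to ``the reduction to finite sets established above'' is not legitimate here: that reduction lemma assumes $F$ is \emph{finitary}, whereas the present lemma is stated for an arbitrary $Set$-functor. Fortunately the only use you make of finiteness is the existence of the right inverses $a_0^-$, $b_0^-$, and the paper itself grants right-invertibility of \emph{arbitrary} surjections via the axiom of choice (``This general statement is equivalent to the axiom of choice\ldots''). So simply delete the reduction step and invoke right-invertibility directly; the rest of your argument goes through verbatim for arbitrary sets.
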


In this note, we shall consider pullbacks of maps $\alpha,\beta$
where $im\,\alpha=im\,\beta$. Therefore, the following result is
relevant:
\begin{lem}
\label{lem:kernel pairs}For a $Set$-functor $F$ the following are
equivalent:
\begin{enumerate}
\item $F$ weakly preserves kernel pairs
\item $F$ weakly preserves pullbacks of epis
\item $F$ weakly preserves the pullback of maps $\alpha$ and $\beta$
for which $im\,\alpha=im\,\beta$.
\end{enumerate}
\end{lem}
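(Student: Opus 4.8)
The plan is to prove the chain of implications $(1)\Rightarrow(2)\Rightarrow(3)\Rightarrow(1)$, which is the most economical way to establish the three-way equivalence. The easy direction is to notice that kernel pairs are a special case of pullbacks of epis, and pullbacks of epis are a special case of pullbacks of maps with equal image; so $(3)\Rightarrow(2)\Rightarrow(1)$ is essentially a matter of observing which instances specialize to which. Concretely, if $\alpha\colon X\twoheadrightarrow Z$ is epi then $\operatorname{im}\alpha=Z=\operatorname{im}\alpha$, so the pullback of $\alpha$ with itself (a kernel pair) falls under condition (3); and any pullback of two epis $\alpha,\beta$ onto the common codomain $Z$ has $\operatorname{im}\alpha=Z=\operatorname{im}\beta$, so (3) subsumes (2). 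This settles the ``downward'' specializations at once.

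The substantive work is the reverse direction $(1)\Rightarrow(3)$, showing that weak preservation of kernel pairs already forces weak preservation of the pullback of \emph{any} pair $\alpha\colon X\to Z$, $\beta\colon Y\to Z$ with $\operatorname{im}\alpha=\operatorname{im}\beta=:Z_{0}$. I would work pointwise via Proposition~\ref{prop:pointwise}: given $p\in F(X)$, $q\in F(Y)$ with $(F\alpha)p=(F\beta)q=:r$, I must produce $s\in F(Pb(\alpha,\beta))$ mapping to $p$ and $q$. The idea is to realize the pullback of $\alpha$ and $\beta$ as a kernel pair \emph{after restricting to the common image}. First replace $\alpha$ and $\beta$ by their corestrictions $\alpha',\beta'\colon X,Y\twoheadrightarrow Z_{0}$ onto the common image $Z_{0}$; since $r$ factors through $F(Z_{0})$ (because $\operatorname{im}\alpha=\operatorname{im}\beta$ lets $r$ be pulled back along the inclusion $Z_{0}\hookrightarrow Z$, using finiteness and the fact that $r$ lies in the image), the hypotheses transfer to $\alpha',\beta'$ and $Pb(\alpha,\beta)=Pb(\alpha',\beta')$.

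Now form the disjoint union $W:=X\sqcup Y$ with the copairing map $\gamma:=[\alpha',\beta']\colon W\twoheadrightarrow Z_{0}$, which is surjective since $\alpha',\beta'$ are jointly surjective onto $Z_{0}$. The point is that $Pb(\alpha',\beta')$ embeds into the kernel pair $\ker\gamma\subseteq W\times W$ as the ``cross'' block consisting of pairs $(x,y)$ with $x\in X$, $y\in Y$ and $\gamma x=\gamma y$. Transport $p$ and $q$ into $F(W)$ along the coproduct injections $\iota_{X},\iota_{Y}$; then $(F\iota_{X})p$ and $(F\iota_{Y})q$ both map to $r$ under $F\gamma$, so weak preservation of the kernel pair of $\gamma$ (hypothesis~(1)) yields some $t\in F(\ker\gamma)$ with $(F\pi_{1})t=(F\iota_{X})p$ and $(F\pi_{2})t=(F\iota_{Y})q$. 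The main obstacle, and the step requiring care, is to show that this $t$ actually lands in the sub-block $F(Pb(\alpha',\beta'))$ rather than in some other component of $F(\ker\gamma)$: one must argue, using the first projection $\pi_{1}$ composed with the retraction $W\to X$ together with finitariness, that $t$ is supported on pairs whose first coordinate lies in $X$ and whose second lies in $Y$, so that a suitable restriction map sends $t$ to the desired $s\in F(Pb(\alpha',\beta'))$. Once $s$ is obtained, naturality of $F$ on the inclusions recovers $(F\pi_{1})s=p$ and $(F\pi_{2})s=q$, completing the verification via Proposition~\ref{prop:pointwise}.
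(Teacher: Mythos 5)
Your overall architecture is viable and, notably, more self-contained than the paper's: the paper cites the equivalence $(1)\Leftrightarrow(2)$ to Henkel and the first author and only proves $(2)\Leftrightarrow(3)$ by epi--mono factorization, whereas you attempt a direct proof of the hard implication. Two corrections to your ``easy'' part first. The slogan ``kernel pairs are a special case of pullbacks of epis'' is false for a non-surjective $\alpha$; what is true, and all you need for $(3)\Rightarrow(1)$, is that for \emph{any} map $\alpha$ one trivially has $\mathrm{im}\,\alpha=\mathrm{im}\,\alpha$, so \emph{every} kernel pair falls under condition (3) --- no epi hypothesis needed. But then, to close the cycle, you must still get \emph{out} of (2), i.e.\ prove $(2)\Rightarrow(1)$ or $(2)\Rightarrow(3)$, and that step genuinely requires the epi--mono factorization argument (write $\alpha=m\circ\alpha'$, $\beta=m\circ\beta'$ with $m$ the inclusion of the common image, note $Pb(\alpha,\beta)=Pb(\alpha',\beta')$, and use that $Fm$ is injective because $m$ is a split mono); this argument, which is exactly the paper's proof of $(2)\Leftrightarrow(3)$, is absent from your text.

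The serious gap is the final step of your $(1)\Rightarrow(3)$. Having obtained $t\in F(\ker\gamma)$ with $(F\pi_1)t=(F\iota_X)p$ and $(F\pi_2)t=(F\iota_Y)q$, you propose to show that $t$ ``is supported on'' the cross block $Pb(\alpha',\beta')\subseteq\ker\gamma$, invoking finitariness. This cannot work as stated: the lemma concerns an \emph{arbitrary} $Set$-functor, so finitariness (a hypothesis of a different lemma in the paper) is unavailable; worse, the inference from ``$(F\pi_1)t$ lies in the image of $F\iota_X$'' to ``$t$ lies in $F(\pi_1^{-1}(X)\cap\ker\gamma)$'' is precisely an instance of preimage preservation, which arbitrary $Set$-functors lack --- and which the paper stresses its free-lattice functors provably fail, so any proof resting on it is useless for the intended application. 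Indeed, the $t$ supplied by weak preservation need not lie in the sub-block at all. The repair is to \emph{push} $t$ into the block rather than to locate it there: choose sections $\sigma$ of $\alpha'$ and $\tau$ of $\beta'$ (they exist since $\alpha',\beta'$ are onto $Z_0$), define $\theta_1:W\to X$ by $\theta_1x=x$, $\theta_1y=\sigma\beta'y$ and $\theta_2:W\to Y$ by $\theta_2y=y$, $\theta_2x=\tau\alpha'x$, and set $\rho(w,w'):=(\theta_1w,\theta_2w')$; this maps $\ker\gamma$ into $Pb(\alpha',\beta')$ because $\alpha'\circ\theta_1=\gamma=\beta'\circ\theta_2$. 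Writing $\pi_1',\pi_2'$ for the projections of $Pb(\alpha',\beta')$ to $X$ and $Y$, one has $\pi_1'\circ\rho=\theta_1\circ\pi_1$ and $\pi_2'\circ\rho=\theta_2\circ\pi_2$, so $s:=(F\rho)t$ satisfies $(F\pi_1')s=(F\theta_1)(F\iota_X)p=F(\theta_1\circ\iota_X)p=p$ and likewise $(F\pi_2')s=q$, which is what Proposition~\ref{prop:pointwise} demands. (The same split-mono observation, not ``finiteness'', is also what legitimizes your earlier transfer of the hypothesis $(F\alpha)p=(F\beta)q$ to the corestrictions $\alpha',\beta'$.) With these repairs your route goes through and constitutes a genuine alternative to the paper's citation-plus-factorization treatment.
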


The equivalence of $1.$ and $2.$ is due to the first author with
his student Ch.~Henkel, see \cite{henkel2010,Gumm20b}. The equivalence
of $2.$ and $3.$ is easily seen by epi-mono-factorization of $\alpha$
and of $\beta$. We obtain $\alpha=m\circ\alpha'$ and $\beta=m\circ\beta'$
where $m$ is mono and $\alpha'$ and $\beta'$ are epi. Then $Pb(\alpha,\beta)=Pb(\alpha',\beta')$.

\section{The free-algebra functor}

For any variety $\mathcal{V}$ of algebras of fixed signature $\tau,$
and for a set $X$ of variables, we denote by $F_{\mathcal{V}}(X)$
the $\mathcal{V}$-algebra freely generated by the set $X.$ Its defining
property is:
\begin{prop}
\emph{Given any algebra $A$ of type $\tau$ and given any set map
$\varphi:X\to A$, there is a unique homomorphism $\tilde{\varphi}:F_{\mathcal{V}}(X)\to A$
such that $\tilde{\varphi}\circ\iota_{X}=\varphi$ where $\iota_{X}$
denotes the inclusion of variables $X$ in $F_{\mathcal{V}}(X).$}
\[
\xymatrix{F_{\mathcal{V}}(X)\ar[r]^{\tilde{\varphi}} & A\\
X\ar@{^{(}->}[u]^{\iota_{X}}\ar[ur]_{\varphi}
}
\]
\end{prop}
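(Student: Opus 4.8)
The plan is to exhibit the standard construction of $F_{\mathcal{V}}(X)$ and to verify the stated universal property against it. First I would build the \emph{absolutely free} (term) algebra $T(X)$, whose underlying set consists of all formal terms in the variables $X$ over the signature $\tau$, with each operation symbol interpreted syntactically. I would then realise $F_{\mathcal{V}}(X)$ as the quotient $T(X)/\theta_{\mathcal{V}}$, where $\theta_{\mathcal{V}}$ is the relation that links two terms $p,q$ precisely when $\mathcal{V}\models p\approx q$; a short check shows that $\theta_{\mathcal{V}}$ is a congruence and that the quotient lies in $\mathcal{V}$. The map $\iota_{X}$ then sends each variable $x$ to its congruence class. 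With this description in hand, existence and uniqueness of $\tilde{\varphi}$ split into two independent arguments.

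For existence, given $\varphi:X\to A$ with $A\in\mathcal{V}$, I would first extend $\varphi$ along the inclusion $X\hookrightarrow T(X)$ to a homomorphism $\hat{\varphi}:T(X)\to A$ by structural recursion on terms, setting $\hat{\varphi}(x)=\varphi(x)$ on variables and $\hat{\varphi}(f(t_{1},\dots,t_{k}))=f^{A}(\hat{\varphi}t_{1},\dots,\hat{\varphi}t_{k})$ on compound terms; this is the routine universal property of the term algebra. The crucial point is that $\hat{\varphi}$ descends to the quotient, i.e. that $\theta_{\mathcal{V}}\subseteq\ker\hat{\varphi}$: if $p\,\theta_{\mathcal{V}}\,q$ then $\mathcal{V}\models p\approx q$, and since $A\in\mathcal{V}$ the evaluations of $p$ and $q$ in $A$ under the assignment $\varphi$ coincide, whence $\hat{\varphi}p=\hat{\varphi}q$. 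Applying the diagram Lemma~\ref{lem:diagram_lemma} to the canonical surjection $\pi:T(X)\twoheadrightarrow F_{\mathcal{V}}(X)$ yields a unique set map $\tilde{\varphi}$ with $\tilde{\varphi}\circ\pi=\hat{\varphi}$, and a short verification shows that $\tilde{\varphi}$ is a homomorphism and satisfies $\tilde{\varphi}\circ\iota_{X}=\varphi$.

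For uniqueness, I would use that $\iota_{X}(X)$ generates $F_{\mathcal{V}}(X)$, since every element is the class of a term and every term is built from variables by the operations. Hence any two homomorphisms out of $F_{\mathcal{V}}(X)$ that agree on $\iota_{X}(X)$ must agree everywhere, by induction on term complexity; applying this to $\tilde{\varphi}$ and to any competing $\psi$ with $\psi\circ\iota_{X}=\varphi$ forces $\psi=\tilde{\varphi}$. The only genuinely delicate step is the factoring in the existence part: it rests on the identification of $\theta_{\mathcal{V}}$ with ``equality provable in $\mathcal{V}$'', so that mere membership $A\in\mathcal{V}$ already guarantees that $A$ respects every pair in $\theta_{\mathcal{V}}$. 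Once that identification is granted, everything else is bookkeeping with structural induction and the diagram lemma.
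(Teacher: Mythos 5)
Your proof is correct, but note that the paper offers no proof at all for this proposition: it is stated as the \emph{defining property} of $F_{\mathcal{V}}(X)$, i.e.\ as the standard universal-mapping characterization of free algebras, which the authors take as known. What you have written out is the classical Birkhoff construction --- term algebra $T(X)$, quotient by the congruence $\theta_{\mathcal{V}}$ of $\mathcal{V}$-provable equalities, structural recursion for existence, generation by $\iota_{X}(X)$ for uniqueness --- and the steps are sound, including the nice reuse of the paper's Lemma~\ref{lem:diagram_lemma} to factor $\hat{\varphi}$ through the canonical surjection $\pi$. Two of the steps you label routine do deserve their one-line arguments: that $T(X)/\theta_{\mathcal{V}}$ lies in $\mathcal{V}$ uses that identities are stable under substitution of terms for variables, and that the factored map $\tilde{\varphi}$ is a homomorphism uses surjectivity of $\pi$ together with the fact that $\hat{\varphi}$ is one.

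One point worth flagging: as printed, the proposition says ``any algebra $A$ of type $\tau$,'' but your proof --- correctly --- assumes $A\in\mathcal{V}$, and this hypothesis cannot be dropped. The descent step $\theta_{\mathcal{V}}\subseteq\ker\hat{\varphi}$ is exactly where membership in $\mathcal{V}$ is used, and the literal statement is false for general $\tau$-algebras: take $\mathcal{V}$ to be the variety of abelian groups, $X=\{x,y\}$, $A=S_{3}$, and $\varphi$ sending $x,y$ to two non-commuting permutations; then no homomorphism $\mathbb{Z}^{2}\cong F_{\mathcal{V}}(X)\to A$ extends $\varphi$, since $x$ and $y$ commute in $F_{\mathcal{V}}(X)$. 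So your argument establishes the intended, correct version of the statement (with $A\in\mathcal{V}$), which is also the only version the paper ever uses.
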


In particular, starting with a map $f:X\to Y,$ and extending it with
$\iota_{Y}$ we obtain a map $\bar{f}:F_{\mathcal{V}}(X)\to F_{\mathcal{V}}(Y)$
as homomorphic extension of $\iota_{Y}\circ f.$

\[
\xymatrix{F_{\mathcal{V}}(X)\ar[r]^{\bar{f}} & F_{\mathcal{V}}(Y)\\
X\ar@{^{(}->}[u]^{\iota_{X}}\ar[r]^{f} & Y\ar@{^{(}->}[u]^{\iota_{Y}}
}
\]
It is easy to check that $\overline{id_{X}}=id_{F_{\mathcal{V}}(X)}$
and $\overline{g\circ f}=\bar{g}\circ\bar{f},$ hence $F_{\mathcal{V}}$
with object map $X\mapsto F_{\mathcal{V}}(X)$ and morphism map $f\mapsto\bar{f}$
is a functor. We shall consider $F_{\mathcal{V}}(X)$ as a set and
$\bar{f}$ as a set map when considering $F_{\mathcal{V}}$ as a set
functor. Thus, we suppress showing the application of the forgetful
functor U.

If $f$ in the above picture is surjective, then so is $\bar{f}.$
This is in fact so for any Set-functor $F.$ Namely, if $f:X\to Y$
is surjective it has a right-inverse $f^{-}$ such that $f\circ f^{-}=id_{Y},$
from which the functor properties yield $F(f)\circ F(f^{-})=id_{F(Y)},$
demonstrating that $F(f^{-})$ is a right inverse to $Ff$, which
therefore is surjective.

It is interesting to observe, even though it will not be needed for
the proof of our main result, that for a free-lattice functor $F_{\mathcal{L}}$,
with $\mathcal{L}$ a variety of lattices (without further operations),
the converse is almost true:
\begin{prop}
If $\mathcal{L}$ is a (quasi)-variety of lattices and $\varphi:F_{\mathcal{L}}(X)\twoheadrightarrow F_{\mathcal{L}}(Y)$
is a surjective homomorphism, then there is a subset $X_{0}\subseteqq X$
and a surjective map $f:X_{0}\twoheadrightarrow Y$ such that $\varphi$
restricted to $F_{\mathcal{L}}(X_{0})$ is $\bar{f}.$
\end{prop}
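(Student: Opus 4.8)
The plan is to reduce the whole statement to a single combinatorial fact about generators of a free lattice and then assemble the conclusion from the universal property. The target is to produce, for every generator $y\in Y$, an \emph{individual} generator $x\in X$ with $\varphi(x)=y$; once this is available, the rest is bookkeeping.

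First I would isolate the crux. \textbf{Claim:} for each $y\in Y$ there is an $x\in X$ with $\varphi(x)=y$. Since $\varphi$ is surjective we may pick $w\in F_{\mathcal L}(X)$ with $\varphi(w)=y$, and since $F_{\mathcal L}$ is finitary, $w$ is a lattice term $w=w(x_1,\dots,x_n)$ in finitely many generators $x_i\in X$. As $\varphi$ is a homomorphism, $\varphi(w)$ is obtained by applying the same term to the elements $\varphi(x_i)\in F_{\mathcal L}(Y)$, so $y=w(\varphi x_1,\dots,\varphi x_n)$. I would now argue by induction on the length of $w$, using that in a free lattice every generator $y$ is \emph{both join-irreducible and meet-irreducible}. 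If $w$ is a single variable $x$, then $\varphi(x)=y$ and we are done. If $w=w_1\vee w_2$, then $y=\varphi(w_1)\vee\varphi(w_2)$ forces $y=\varphi(w_1)$ or $y=\varphi(w_2)$ by join-irreducibility, and I recurse on the strictly shorter term producing $y$; the case $w=w_1\wedge w_2$ is dual, using meet-irreducibility. The induction terminates at a variable, establishing the claim.

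With the claim in hand the assembly is routine. Put $X_0:=\{x\in X\mid \varphi(x)\in Y\}$ and let $f:=\varphi\restriction X_0\colon X_0\to Y$; this is well defined into $Y$ by the definition of $X_0$, and it is surjective by the claim. The subalgebra of $F_{\mathcal L}(X)$ generated by $X_0$ is (freely) isomorphic to $F_{\mathcal L}(X_0)$, and on it both $\varphi$ and $\bar f$ are homomorphisms into $F_{\mathcal L}(Y)$ agreeing on the generating set $X_0$, since $\bar f(x)=f(x)=\varphi(x)$ for every $x\in X_0$. By the uniqueness part of the universal property of $F_{\mathcal L}(X_0)$ they coincide, i.e. $\varphi\restriction F_{\mathcal L}(X_0)=\bar f$, which is exactly the assertion.

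The main obstacle is the double-irreducibility of generators used in the induction. For the variety of \emph{all} lattices this is classical: free-lattice generators are not merely join- and meet-irreducible but even join- and meet-prime, as follows from the solution of the word problem and the structure theory of free lattices. The delicate point is that we work in an arbitrary (quasi)variety $\mathcal L$, where $F_{\mathcal L}(Y)$ is a proper quotient of the absolutely free lattice and irreducibility need not survive passage to quotients in general. I would handle this by exploiting that the defining congruence is fully invariant and that the two-element lattice $\mathbf 2$ lies in every nontrivial $\mathcal L$ (any two comparable elements of a nontrivial lattice span a copy of $\mathbf 2$, and quasivarieties are closed under subalgebras). Concretely, for a fixed $y$ the homomorphism $h_y\colon F_{\mathcal L}(Y)\to\mathbf 2$ sending $y\mapsto 1$ and every other generator to $0$ detects the relevant join, and the companion map sending $y\mapsto 0$ and the rest to $1$ detects the meet; the core of the argument is to upgrade these distributive ``shadows'' to genuine irreducibility in $F_{\mathcal L}(Y)$, which is where the free-lattice structure theory must be invoked. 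I expect this verification to be the only nontrivial step, everything else being the universal property together with a straightforward term induction.
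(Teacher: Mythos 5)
Your proposal is correct and matches the paper's proof: the paper likewise reduces everything to the fact that the generators of any relatively free lattice in $\mathcal{L}$ are both $\vee$- and $\wedge$-irreducible (which it simply cites from the free-lattice literature), then collects the generator preimages of $Y$ into $X_{0}$ and identifies $\varphi$ with $\bar{f}$ on $F_{\mathcal{L}}(X_{0})$ via uniqueness of homomorphic extensions, exactly as in your assembly step. The one step you leave open is precisely this cited fact, and your $\mathbf{2}$-valued homomorphism $h_{y}$ already closes it without any deep structure theory: an induction on terms shows that $h_{y}(w)=1$ implies $y\le w$, so $y=a\vee b$ gives, say, $h_{y}(a)=1$ and hence $a\le y\le a$, i.e.\ $a=y$.
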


\[
\xymatrix{F_{\mathcal{L}}(X)\ar@{->>}[r]^{\varphi} & F_{\mathcal{L}}(Y)\\
F_{\mathcal{L}}(X_{0})\ar@{^{(}->}[u]^{\iota}\ar@{->>}[ur]_{\bar{f}}
}
\]

\begin{proof}
Each element $y\in Y$ must have a $\varphi-$preimage in $X,$ since
the free generators in any lattice free in $\mathcal{L}$ are both
$\vee$- and $\wedge$-irreducible, see \cite{FreeLattices}. Collecting
these preimages of $Y$ into a subset $X_{0}$ of $X,$ let $f$ be
the domain restriction of $\varphi$ to $X_{0}$. By construction,
$f:X_{0}\to Y$ is surjective, and $\varphi$ agrees with $\bar{f}$
on $F_{\mathcal{V}}(X_{0}).$
\end{proof}
Here we are interested in $F_{\mathcal{L}}$ where $\mathcal{L}$
is any (quasi)-variety of lattices, but we allow additional operations
in the signature, as long as the axioms of $\mathcal{L}$ force those
operations to be monotonic with respect to the lattice ordering. In
short: we assume that $\mathcal{L}$ is a quasi-variety of \emph{lattice-ordered
universal algebras}.

If $a,b\in A$ for such a lattice ordered algebra, we denote by $[a,b]$
the interval 
\[
[a,b]:=\{x\in A\mid a\le x\le b\},
\]
which is, of course, nonempty iff $a\le b.$

Let $X,Y,Z$ be finite sets. With $F_{\mathcal{L}}(X)$ we continue
to denote the free lattice-ordered algebra in $\mathcal{L}$ generated
by $X.$
\begin{lem}
\label{lem:interval_preimage}Let $g:X\twoheadrightarrow Y$ be a
surjective map and $\bar{g}:F_{\mathcal{L}}(X)\to F_{\mathcal{L}}(Y)$
the homomorphic extension of $g$. Then there are homomorphisms $\check{g},\hat{g}:F_{\mathcal{L}}(Y)\to F_{\mathcal{L}}(X)$
such that
\begin{enumerate}
\item $\bar{g}\circ\check{g}=id=\bar{g}\circ\hat{g}$,\label{enu:identity}
\item $(\hat{g}\circ\bar{g})p\le p\le(\check{g}\circ\bar{g})p$ ~for each
$p\in F_{\mathcal{L}}(X),$\label{enu:preimage_ordering}
\item for all $q_{1},q_{2}$ in $F_{\mathcal{L}}(Y)$ we have $\,\bar{g}^{-1}[q_{1},q_{2}]=[\hat{g}q_{1},\check{g}q_{2}],$
\item $\,\bar{g}^{-1}\{q\}=[\hat{g}q,\check{g}q]$~ for each $q\in F_{\mathcal{L}}(Y).$
\label{enu:preimage_is_interval}
\end{enumerate}
\end{lem}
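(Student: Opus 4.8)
The plan is to build the two homomorphisms directly from the fibres of $g$. Since $g$ is surjective and $X$ is finite, for every $y\in Y$ the preimage $g^{-1}(y)\subseteq X$ is a finite nonempty set, so its meet and join exist in $F_{\mathcal{L}}(X)$. I would therefore define, on the free generators $y\in Y$,
\[
\hat{g}(y):=\bigwedge g^{-1}(y),\qquad \check{g}(y):=\bigvee g^{-1}(y),
\]
and extend each assignment to a homomorphism $\hat{g},\check{g}:F_{\mathcal{L}}(Y)\to F_{\mathcal{L}}(X)$ by the universal property of the free algebra. Property~\ref{enu:identity} is then checked on generators: since $\bar{g}$ is a homomorphism and $\bar{g}(x)=g(x)=y$ for every $x\in g^{-1}(y)$, one gets $\bar{g}\hat{g}(y)=\bigwedge_{x\in g^{-1}(y)}\bar g(x)=\bigwedge_{x\in g^{-1}(y)}y=y$, and dually $\bar g\check g(y)=y$. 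As $\bar g\hat g$ and $\bar g\check g$ are thus homomorphisms $F_{\mathcal{L}}(Y)\to F_{\mathcal{L}}(Y)$ agreeing with $id$ on generators, the uniqueness part of the universal property forces them both to equal $id$.

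The crux is Property~\ref{enu:preimage_ordering}, and this is where the lattice-\emph{ordered} hypothesis is indispensable. I would isolate the following monotonicity principle: if $h_{1},h_{2}:F_{\mathcal{L}}(X)\to A$ are homomorphisms into a lattice-ordered algebra with $h_{1}(x)\le h_{2}(x)$ for every generator $x\in X$, then $h_{1}(p)\le h_{2}(p)$ for all $p$. This follows by induction on the structure of $p$; the inductive step uses that every operation symbol of $\mathcal{L}$, including $\wedge$ and $\vee$, is interpreted by a monotone operation, so $h_{1}(f(p_{1},\dots,p_{k}))=f(h_{1}p_{1},\dots,h_{1}p_{k})\le f(h_{2}p_{1},\dots,h_{2}p_{k})=h_{2}(f(p_{1},\dots,p_{k}))$. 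On a generator $x$ with $g(x)=y$ we have $x\in g^{-1}(y)$, hence $(\hat g\bar g)(x)=\hat g(y)=\bigwedge g^{-1}(y)\le x$ and symmetrically $x\le\bigvee g^{-1}(y)=(\check g\bar g)(x)$. Applying the principle to the homomorphism pairs $(\hat g\bar g,\,id)$ and $(id,\,\check g\bar g)$ yields $(\hat g\bar g)p\le p\le(\check g\bar g)p$ for all $p$.

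Finally I would derive Property~3 from Properties~\ref{enu:identity} and~\ref{enu:preimage_ordering} together with monotonicity of the homomorphisms $\bar g,\hat g,\check g$. For the inclusion $[\hat g q_1,\check g q_2]\subseteq\bar g^{-1}[q_1,q_2]$, apply the monotone map $\bar g$ to $\hat g q_1\le p\le\check g q_2$ and use Property~\ref{enu:identity} to get $q_1=\bar g\hat g q_1\le\bar g p\le\bar g\check g q_2=q_2$. For the reverse inclusion, assume $q_1\le\bar g p\le q_2$; applying the monotone $\hat g$ to the left inequality and Property~\ref{enu:preimage_ordering} gives $\hat g q_1\le\hat g\bar g p\le p$, while applying $\check g$ to the right inequality and Property~\ref{enu:preimage_ordering} gives $p\le\check g\bar g p\le\check g q_2$, so $p\in[\hat g q_1,\check g q_2]$. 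Property~4 is then just the special case $q_1=q_2=q$. The only genuine obstacle is the monotonicity principle of the second paragraph; everything else is a formal consequence of it together with the section identities, and it is precisely the assumption that all operations of $\mathcal{L}$ are monotone that makes it go through.
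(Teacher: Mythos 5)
Your proposal is correct and follows essentially the same route as the paper: the same fibre-wise definitions $\hat{g}(y)=\bigwedge g^{-1}(y)$, $\check{g}(y)=\bigvee g^{-1}(y)$, the same verification of the section identities on generators, monotonicity of terms for Property~\ref{enu:preimage_ordering}, and the same two-sided interval argument for Properties~3 and~4. The only difference is cosmetic: you spell out as an explicit induction (your ``monotonicity principle'') what the paper compresses into the single line $(\hat{g}\circ\bar{g})p = p(\hat{g}\bar{g}x_{1},\ldots,\hat{g}\bar{g}x_{n}) \le p(x_{1},\ldots,x_{n})$, invoking monotonicity of terms in each argument.
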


\begin{proof}
Let $\hat{g}:F_{\mathcal{L}}(Y)\to F_{\mathcal{L}}(X)$ be the unique
homomorphism which for all $y\in Y$ is defined as 
\[
\hat{g}(y):=\bigwedge\{x\in X\mid gx=y\}
\]
and dually 
\[
\check{g}(y):=\bigvee\{x\in X\mid gx=y\}.
\]
\begin{enumerate}
\item Given $y\in Y$ then 
\begin{align*}
\bar{g}\hat{g}y & =\bar{g}(\bigwedge\{x\mid x\in X,gx=y\})\\
 & =\bigwedge\{\bar{g}x\mid x\in X,gx=y\}\\
 & =\bigwedge\{gx\mid x\in X,gx=y\}\\
 & =\bigwedge\{y\}\\
 & =y,
\end{align*}
 hence $\bar{g}\circ\hat{g}$ (and similarly $\bar{g}\circ\check{g})$
is the identity.
\item For each variable $x\in X$ we have $gx\in Y$, hence 
\[
(\hat{g}\circ\bar{g})x=\hat{g}(gx)=\bigwedge\{x'\in X\mid gx'=gx\}\le x.
\]
 For arbitrary terms $p=p(x_{1},\ldots,x_{n})\in F_{\mathcal{L}}(X)$
where $x_{i}\in X,$ we conclude
\begin{align*}
(\hat{g}\circ\bar{g})p & =(\hat{g}\circ\bar{g})p(x_{1},\ldots,x_{n})\\
 & =p(\hat{g}\bar{g}x_{1},\ldots,\hat{g}\bar{g}x_{n})\\
 & \le p(x_{1},\ldots,x_{n})\\
 & =p,
\end{align*}
and dually $(\check{g}\circ\bar{g})p\ge p.$
\item If $p\in\bar{g}^{-1}[q_{1},q_{2}]$ then $q_{1}\le\bar{g}(p)\le q_{2},$
so $\hat{g}q_{1}\le\hat{g}\bar{g}p\le p\le\check{g}\bar{g}p\le\check{g}q_{2}$,
by (\ref{enu:preimage_ordering}), hence $p\in[\hat{g}q_{1},\check{g}q_{2}].$
Conversely, if $p\in[\hat{g}q_{1},\check{g}q_{2}],$ then $\hat{g}q_{1}\le p\le\check{g}q_{2}$
so 
\[
q_{1}=\bar{g}\hat{g}q_{1}\le\bar{g}p\le\bar{g}\check{g}q_{2}=q_{2},
\]
 by (\ref{enu:identity}), so $p\in\bar{g}^{-1}[q_{1},q_{2}].$ Notice
that we do nowhere require $q_{1}\le q_{2}.$
\item This is a special case of the previous one, where $q_{1}=q=q_{2}.$
\end{enumerate}
Thus preimages of points are intervals. For our weak pullback preservation
property, we shall need joint preimages with respect to different
homomorphisms, which must be intersections of intervals.
\end{proof}
\begin{lem}
\label{lem:joint_interval_preimage}Given surjective maps $g_{1}:X\twoheadrightarrow Y$
and $g_{2}:X\twoheadrightarrow Z$ and given elements $p\in F_{\mathcal{L}}(Y),$
\textup{$q\in F_{\mathcal{L}}(Z)$ then the following equivalent conditions
state that $p$ and $q$ share a common preimage under $\bar{g_{1}}$
and $\bar{g_{2}}$ in $F_{\mathcal{L}}(X):$}
\begin{enumerate}
\item $\bar{g_{1}}^{-1}\{p\}\cap\bar{g_{2}}^{-1}\{q\}\ne\emptyset,$
\item $\hat{g}_{1}p\,\vee\,\hat{g}_{2}q\le\check{g}_{1}p\wedge\check{g}_{2}q$,
\item $\hat{g}_{1}p\le\check{g}_{2}q$ ~and ~ $\hat{g}_{2}q\le\check{g}_{1}p$
\item $\bar{g}_{1}\hat{g}_{2}q\le p$ and $\bar{g}_{2}\hat{g}_{1}p\le q.$
\end{enumerate}
\end{lem}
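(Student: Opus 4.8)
The goal is to prove the equivalence of four conditions characterizing when $p\in F_{\mathcal{L}}(Y)$ and $q\in F_{\mathcal{L}}(Z)$ have a common preimage. My plan is to exploit Lemma~\ref{lem:interval_preimage}(4), which tells us that $\bar{g_1}^{-1}\{p\}=[\hat g_1 p,\check g_1 p]$ and $\bar{g_2}^{-1}\{q\}=[\hat g_2 q,\check g_2 q]$. Thus condition (1) asserts precisely that these two intervals intersect, and the whole lemma reduces to elementary order-theoretic manipulations inside the lattice reduct of $F_{\mathcal{L}}(X)$.

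The natural strategy is to prove a cycle of implications, say $(1)\Rightarrow(2)\Rightarrow(3)\Rightarrow(4)\Rightarrow(1)$, or to observe that $(1)$, $(2)$, $(3)$ are almost immediately interderivable and treat $(4)$ separately. For $(1)\Leftrightarrow(2)$: the intersection of two intervals $[a_1,b_1]\cap[a_2,b_2]$ is the interval $[a_1\vee a_2,\,b_1\wedge b_2]$, which is nonempty exactly when $a_1\vee a_2\le b_1\wedge b_2$. Substituting $a_i=\hat g_\bullet$ and $b_i=\check g_\bullet$ gives precisely condition (2). For $(2)\Leftrightarrow(3)$: in any lattice, $u\vee v\le w\wedge t$ holds iff each of $u,v$ lies below each of $w,t$, i.e. iff $u\le w$, $u\le t$, $v\le w$, $v\le t$. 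Here $u=\hat g_1 p$, $v=\hat g_2 q$, $w=\check g_1 p$, $t=\check g_2 q$. The two ``diagonal'' inequalities $\hat g_1 p\le\check g_1 p$ and $\hat g_2 q\le\check g_2 q$ are automatic, since $\hat g\le\check g$ always holds (the meet of a set lies below its join), so only the two ``cross'' inequalities of (3) remain as genuine content.

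The step requiring the most care is $(3)\Leftrightarrow(4)$, which is where the homomorphisms $\bar g_i$ and the identities of Lemma~\ref{lem:interval_preimage}(1) come into play. Starting from $\hat g_2 q\le\check g_1 p$, I would apply the monotone homomorphism $\bar g_1$ to both sides and use $\bar g_1\check g_1 p=p$ from part~(1) of the previous lemma to obtain $\bar g_1\hat g_2 q\le p$; symmetrically $\hat g_1 p\le\check g_2 q$ yields $\bar g_2\hat g_1 p\le q$. For the reverse direction, from $\bar g_1\hat g_2 q\le p$ I would apply $\hat g_1$ (monotone), giving $\hat g_1\bar g_1\hat g_2 q\le\hat g_1 p$, and then invoke Lemma~\ref{lem:interval_preimage}(2) in the form $\hat g_2 q\le\check g_1\bar g_1\hat g_2 q$ together with $\hat g_1\bar g_1\le\mathrm{id}\le\check g_1\bar g_1$ to recover $\hat g_2 q\le\check g_1 p$. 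The anticipated obstacle is getting these ``adjoint-style'' inequalities to chain in the correct direction: one must keep straight that $\hat g\circ\bar g$ deflates while $\check g\circ\bar g$ inflates, and apply the right one of $\hat g_1,\check g_1$ at each stage so that the sandwich inequality of (2) closes the loop rather than pointing the wrong way. Once this bookkeeping is handled, all four conditions are seen to be equivalent.
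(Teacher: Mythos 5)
Your overall route is the same as the paper's: identify the two preimage sets as the intervals $[\hat{g}_1 p,\check{g}_1 p]$ and $[\hat{g}_2 q,\check{g}_2 q]$ via Lemma~\ref{lem:interval_preimage}(4), observe that two intervals meet iff the join of the lower endpoints lies below the meet of the upper endpoints, discard the two automatic ``diagonal'' inequalities, and pass between (3) and (4) using $\bar{g}_i\check{g}_i=\mathrm{id}$ together with the inflation/deflation inequalities of Lemma~\ref{lem:interval_preimage}(2). Your steps $(1)\Leftrightarrow(2)\Leftrightarrow(3)$ and $(3)\Rightarrow(4)$ are correct and match the paper.

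However, your step $(4)\Rightarrow(3)$ fails as written --- and it is exactly the step you yourself flagged as delicate. From $\bar{g}_1\hat{g}_2 q\le p$ you apply $\hat{g}_1$, obtaining $\hat{g}_1\bar{g}_1\hat{g}_2 q\le\hat{g}_1 p$. This is a dead end: the deflation inequality gives $\hat{g}_1\bar{g}_1\hat{g}_2 q\le\hat{g}_2 q$, i.e.\ it bounds the left-hand side from \emph{above} by $\hat{g}_2 q$, whereas what you need is a lower bound statement for $\hat{g}_2 q$; no combination of these with $\hat{g}_1 p\le\check{g}_1 p$ yields $\hat{g}_2 q\le\check{g}_1 p$. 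The correct move (and the paper's) is to apply $\check{g}_1$ instead: monotonicity gives $\check{g}_1\bar{g}_1\hat{g}_2 q\le\check{g}_1 p$, and prepending the inflation $\hat{g}_2 q\le\check{g}_1\bar{g}_1\hat{g}_2 q$ from Lemma~\ref{lem:interval_preimage}(2) closes the chain $\hat{g}_2 q\le\check{g}_1\bar{g}_1\hat{g}_2 q\le\check{g}_1 p$; symmetrically $\hat{g}_1 p\le\check{g}_2\bar{g}_2\hat{g}_1 p\le\check{g}_2 q$. A second, minor point: your justification of the diagonal inequalities $\hat{g}_ip\le\check{g}_ip$ (``the meet of a set lies below its join'') only covers generators; for a general term $p$ you need monotonicity of terms in lattice-ordered algebras, or equivalently pick any $\bar{g}_1$-preimage $r$ of $p$ (which exists since $\bar{g}_1$ is surjective) and use $\hat{g}_1 p=\hat{g}_1\bar{g}_1 r\le r\le\check{g}_1\bar{g}_1 r=\check{g}_1 p$. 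With these two repairs your argument coincides with the paper's proof.
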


\begin{proof}
By the previous lemma,
\begin{align*}
g_{1}^{-1}\{p\}\cap g_{2}^{-1}\{q\}\ne\emptyset & \iff[\hat{g}_{1}p,\check{g}_{1}p]\cap[\hat{g}_{2}q,\check{g}_{2}q]\ne\emptyset\\
 & \iff[\hat{g}_{1}p\vee\hat{g}_{2}q,\check{g}_{1}p\wedge\check{g}_{2}q]\ne\emptyset\\
 & \iff\hat{g}_{1}p\vee\hat{g}_{2}q\le\check{g}_{1}p\wedge\check{g}_{2}q\\
 & \iff\hat{g}_{1}p\le\check{g}_{2}q\text{ and }\hat{g}_{2}q\le\check{g}_{1}p\\
 & \implies\bar{g}_{2}\hat{g}_{1}p\le\bar{g}_{2}\check{g}_{2}q=q\text{ and }\bar{g}_{1}\hat{g}_{2}q\le\bar{g}_{1}\check{g}_{1}p=p\\
 & \implies\hat{g}_{1}p\le\check{g}_{2}\bar{g}_{2}\hat{g}_{1}p\le\check{g}_{2}q\text{ and }\hat{g}_{2}q\le\check{g}_{1}\bar{g}_{1}\hat{g}_{2}q\le\check{g}_{1}p.
\end{align*}
\end{proof}
Combining the previous lemmas, we obtain:
\begin{lem}
\label{lem:solution_interval}Under the assumptions of Lemma \ref{lem:joint_interval_preimage},
if $p$ and $q$ have a common preimage under $g_{1}$ and $g_{2}$
then the set of all common preimages is the interval $[\hat{g}_{1}p\,\vee\,\hat{g}_{2}q,\check{g}_{1}p\wedge\check{g}_{2}q].$
\end{lem}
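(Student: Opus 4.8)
The plan is to describe the set of common preimages directly as an intersection of two intervals and then reduce the claim to a one-line lattice computation. By definition, an element $r\in F_{\mathcal{L}}(X)$ is a common preimage of $p$ and $q$ exactly when $\bar{g}_{1}r=p$ and $\bar{g}_{2}r=q$, that is, when $r\in\bar{g}_{1}^{-1}\{p\}\cap\bar{g}_{2}^{-1}\{q\}$. Hence the set of all common preimages is precisely this intersection, and the whole task is to compute it.

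First I would invoke Lemma~\ref{lem:interval_preimage}(\ref{enu:preimage_is_interval}) twice, once for each of the maps $\bar{g}_{1}$ and $\bar{g}_{2}$, to rewrite the two individual point-preimages as intervals:
\[
\bar{g}_{1}^{-1}\{p\}=[\hat{g}_{1}p,\check{g}_{1}p],\qquad
\bar{g}_{2}^{-1}\{q\}=[\hat{g}_{2}q,\check{g}_{2}q].
\]
Then the set of common preimages equals $[\hat{g}_{1}p,\check{g}_{1}p]\cap[\hat{g}_{2}q,\check{g}_{2}q]$, and it remains only to evaluate this intersection of intervals inside the lattice $F_{\mathcal{L}}(X)$.

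The decisive step is the elementary lattice identity
\[
[a_{1},b_{1}]\cap[a_{2},b_{2}]=[a_{1}\vee a_{2},\,b_{1}\wedge b_{2}],
\]
valid in any lattice: an element $r$ lies in both intervals iff $a_{1}\le r$, $a_{2}\le r$, $r\le b_{1}$ and $r\le b_{2}$, and by the defining universal properties of $\vee$ and $\wedge$ this is equivalent to $a_{1}\vee a_{2}\le r\le b_{1}\wedge b_{2}$. Applying this with the endpoints above yields exactly the claimed interval $[\hat{g}_{1}p\vee\hat{g}_{2}q,\ \check{g}_{1}p\wedge\check{g}_{2}q]$.

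Finally I would observe that the standing hypothesis, namely that $p$ and $q$ do have a common preimage, guarantees this interval is nonempty; this is precisely condition~(2) of Lemma~\ref{lem:joint_interval_preimage}, the inequality $\hat{g}_{1}p\vee\hat{g}_{2}q\le\check{g}_{1}p\wedge\check{g}_{2}q$. Since all the substantive work has already been carried out in Lemmas~\ref{lem:interval_preimage} and~\ref{lem:joint_interval_preimage}, I do not anticipate any genuine obstacle here: the only remaining content is the interval-intersection identity, which is routine, so this lemma is essentially a clean packaging of the two preceding results.
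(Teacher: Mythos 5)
Your proof is correct and follows essentially the same route as the paper, which offers no separate argument but simply states the lemma as a combination of Lemma~\ref{lem:interval_preimage}(\ref{enu:preimage_is_interval}) (point preimages are intervals) with the interval-intersection identity already used inside the proof of Lemma~\ref{lem:joint_interval_preimage}. Your write-up makes that implicit combination explicit, including the observation that nonemptiness is exactly condition~(2) of Lemma~\ref{lem:joint_interval_preimage}, so nothing is missing.
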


\section{Weak preservation of kernel pairs}

It is well known that the complete lattice of congruence relations
of any lattice, hence of any lattice ordered algebra, is distributive,
so in particular, $\mathcal{L}$ is \emph{congruence modular}. As
a corollary to a result from \cite{Gumm20b} it therefore follows,
that $F_{\mathcal{L}}$ will \emph{not} preserve preimages, hence
will \emph{not} weakly preserve all pullbacks. Fortunately, though,
this does not preclude $F_{\mathcal{L}}$ from preserving kernel pairs,
or equivalently, pullbacks of maps whose images agree. This is in
fact what we are proving now. Our main result is:
\begin{thm}
\label{thm:main}For any variety $\mathcal{L}$ of lattice-ordered
algebras the functor $F_{\mathcal{L}}$ weakly preserves pullbacks
of epis.
\end{thm}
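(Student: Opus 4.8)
The plan is to use the elementwise criterion for weak preservation of pullbacks of epis that was set up in Proposition~\ref{prop:pointwise} together with Lemma~\ref{lem:kernel pairs}. By the finitary reduction lemma it suffices to treat maps between finite sets, and by Lemma~\ref{lem:kernel pairs}(3) it is enough to handle a pullback of two maps $\alpha,\beta$ with $\operatorname{im}\alpha=\operatorname{im}\beta$; after epi-mono factorization we may as well assume $\alpha=g_{1}:X\twoheadrightarrow Y$ and $\beta=g_{2}:X\twoheadrightarrow Z$ are surjective onto a common image, which we rename so that the cospan is $g_{1}:X\twoheadrightarrow W$ and $g_{2}:X\twoheadrightarrow W$ — that is, I reduce to a \emph{kernel-pair} situation. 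Concretely, given $p\in F_{\mathcal{L}}(Y)$ and $q\in F_{\mathcal{L}}(Z)$ with $\bar{g_{1}}$- and $\bar{g_{2}}$-images agreeing in $F_{\mathcal{L}}(W)$, I must produce a common preimage $s\in F_{\mathcal{L}}(X)$.

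The heart of the argument is then entirely supplied by the interval machinery already proved. Lemma~\ref{lem:interval_preimage} shows each fibre $\bar{g_{i}}^{-1}\{\cdot\}$ is an interval $[\hat{g}_{i}(\cdot),\check{g}_{i}(\cdot)]$, Lemma~\ref{lem:joint_interval_preimage} characterizes when two such fibres meet, and Lemma~\ref{lem:solution_interval} identifies the intersection as the interval $[\hat{g}_{1}p\vee\hat{g}_{2}q,\ \check{g}_{1}p\wedge\check{g}_{2}q]$. So the whole theorem reduces to verifying one of the equivalent nonemptiness conditions of Lemma~\ref{lem:joint_interval_preimage}. I would use condition~(4): it suffices to check $\bar{g}_{1}\hat{g}_{2}q\le p$ and $\bar{g}_{2}\hat{g}_{1}p\le q$. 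This is where the hypothesis that the two images coincide must be spent. Writing $r$ for the common value $\bar{g_{1}}p=\bar{g_{2}}q$ in $F_{\mathcal{L}}(W)$, I expect to derive these two inequalities from the relations among $\bar{g}_{i},\hat{g}_{i},\check{g}_{i}$, namely the identities $\bar{g}_{i}\hat{g}_{i}=\bar{g}_{i}\check{g}_{i}=\mathrm{id}$ of Lemma~\ref{lem:interval_preimage}(1) and the monotonicity inequalities of part~(2).

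The main obstacle, and the step I would think about most carefully, is exactly this last verification: translating ``the images agree in $F_{\mathcal{L}}(W)$'' into the order inequality $\bar{g}_{1}\hat{g}_{2}q\le p$ (and its symmetric partner). The danger is that the two maps $g_{1},g_{2}$ go to \emph{different} codomains $Y,Z$, so the equation $\bar{g}_{1}p=\bar{g}_{2}q$ only makes literal sense after the image-agreement reduction has put both values in one free algebra $F_{\mathcal{L}}(W)$; I would need to be scrupulous that the reduction in Lemma~\ref{lem:kernel pairs}(3) really collapses the cospan to a kernel-pair-like shape sharing a single codomain, so that comparing $\bar{g}_{i}$-images is meaningful. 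Once that bookkeeping is clean, the inequality should follow by applying $\bar{g}_{2}$ to $\hat{g}_{1}p\le\check{g}_{1}p$, using that $\bar{g}_{2}$ factors correctly through the common image and that $\bar{g}_{i}\check{g}_{i}=\mathrm{id}$. Having established nonemptiness, I conclude by Proposition~\ref{prop:pointwise} that the required $s$ exists in $F_{\mathcal{L}}(Pb(g_{1},g_{2}))$, and hence $F_{\mathcal{L}}$ weakly preserves pullbacks of epis.
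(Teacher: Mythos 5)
Your proposal follows the paper's own skeleton quite closely: reduce to finite sets, use the elementwise criterion of Prop.~\ref{prop:pointwise}, bring in the interval machinery of Lemmas \ref{lem:interval_preimage}--\ref{lem:solution_interval} with the two surjections taken to be the pullback projections $\pi_{1},\pi_{2}$, and verify condition (4) of Lemma~\ref{lem:joint_interval_preimage}. However, the proof stops exactly where the real mathematical work begins, and the step you defer is not a bookkeeping matter. The inequality $\bar{\pi}_{1}\hat{\pi}_{2}q\le p$ cannot follow from the identities $\bar{g}_{i}\hat{g}_{i}=\bar{g}_{i}\check{g}_{i}=\mathrm{id}$ and the monotonicity inequalities of Lemma~\ref{lem:interval_preimage} alone, because those hold for \emph{arbitrary} $p$ and $q$, whereas condition (4) is equivalent to the two fibres meeting, which fails whenever $\bar{u}p\ne\bar{v}q$: any common preimage $s$ would force $\bar{u}p=\bar{u}\bar{\pi}_{1}s=\overline{u\circ\pi_{1}}\,s=\overline{v\circ\pi_{2}}\,s=\bar{v}q$. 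So the hypothesis must enter the derivation in an essential, functional way. Your sketch --- ``apply $\bar{g}_{2}$ to $\hat{g}_{1}p\le\check{g}_{1}p$ and use $\bar{g}_{i}\check{g}_{i}=\mathrm{id}$'' --- never uses it; moreover it only yields $\bar{\pi}_{2}\hat{\pi}_{1}p\le\bar{\pi}_{2}\check{\pi}_{1}p$ in $F_{\mathcal{L}}(Y)$, an inequality with no link to $q$ at all.

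What is missing is precisely the paper's Lemma~\ref{lem:Hilfslema}, which is the crux of the theorem. There one sets $g:=\bar{\pi}_{1}\circ\hat{\pi}_{2}$, computes $g(y)=\medwedge(x\mid ux=vy)$ on variables, observes that $\ker v\subseteq\ker g$, so that $g$ factors as $g=h\circ v$ by Lemma~\ref{lem:diagram_lemma} (hence $h=g\circ v^{-}$ for a right inverse $v^{-}$ of $v$), and then applies the homomorphic extension $\tilde{h}$ to \emph{both sides} of the valid equation $p(u_{1},\ldots,u_{m})\approx q(v_{1},\ldots,v_{n})$: the $q$-side becomes exactly $q(gy_{1},\ldots,gy_{n})=\bar{\pi}_{1}\hat{\pi}_{2}q$, while the $p$-side becomes $p(gv^{-}ux_{1},\ldots,gv^{-}ux_{m})\le p(x_{1},\ldots,x_{m})$, since $gv^{-}ux_{i}=\medwedge(x\mid ux=ux_{i})\le x_{i}$ and all terms are monotone. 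This transport of the balanced equation along the constructed homomorphism $\tilde{h}$ is the idea your proposal lacks. A secondary but real defect is the setup: a cospan has a common \emph{codomain}, not a common domain; Lemma~\ref{lem:kernel pairs}(3) does not collapse the cospan into a parallel pair $g_{1},g_{2}:X\to W$, and in condition (4) the maps must be the pullback projections (with $g_{1},g_{2}$ as cospan maps, $\hat{g}_{2}q$ is not even defined for $q\in F_{\mathcal{L}}(Z)$). The shape confusion could be repaired, but the absence of the Lemma-\ref{lem:Hilfslema} argument is a genuine gap.
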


From now on, whenever we denote terms $p,$ $q,$ $s$ as $p(x_{1},\ldots,x_{m})$,
$q(y_{1},\ldots,y_{n}),$ $s(z_{1},\ldots,z_{r})$ then we are implying
that their variables are mutually different, i.e. $x_{i}\not\ne x_{j}$,
$y_{i}\not\ne y_{j},$ $z_{i}\not\ne z_{j}$ unless $i=j.$ An equation
$p(u_{1},\ldots,u_{m})\approx q(v_{1},\ldots,v_{n}),$ arises from
substituting variables $u_{i},v_{j}$ for $x_{i}$ and $y_{j}$. For
that purpose we are allowed to have $u_{i}=u_{j}$ or $v_{i}=v_{j}$
even when $i\ne j.$ We denote the corresponding substitutions by
$u,$ resp. $v,$ hence 
\[
u_{i}=u(x_{i})\text{ and }v_{j}=v(y_{j}),
\]
so that $p(u_{1},\ldots,u_{m})=p(ux_{i},\ldots,ux_{m})=\bar{u}\,p(x_{1},\ldots,x_{m}),$
and $q(v_{1},\ldots,v_{m})=\bar{v}\,q(y_{1},\ldots,y_{n})$.

An equation $p(u_{1},\ldots,u_{m})\approx q(v_{1},\ldots,v_{n})$
is\emph{ }called \emph{balanced}, if the same variables occur on both
sides, i.e. $\{u_{1},\ldots,u_{m}\}=\{v_{1},\ldots,v_{n}\}$. With
these conventions and with the help of Prop.~\ref{prop:pointwise},
we can express Theorem.~\ref{thm:main} in purely universal algebraic
terms as follows:
\begin{thm}
\label{thm:Lattice}Let $\mathcal{L}$ be a (quasi-)variety of lattice
ordered algebras, $p(x_{1},\ldots,x_{m})$ and $q(y_{1},\ldots,y_{n})$
 terms and  
\begin{equation}
p(u_{1},\ldots,u_{m})\approx q(v_{1},\ldots,v_{n})\label{eq:pu_eq_qv}
\end{equation}
a balanced equation. Then there is a term $s(z_{1},\ldots,z_{k})$
with $k\le mn$, and variable substitutions $\sigma:\{z_{1},\ldots,z_{k}\}\to\{x_{1},\ldots,x_{m}\},$
and $\tau:\{z_{1},\ldots,z_{k}\}\to\{y_{1},\ldots,y_{n}\}$ so that
\begin{align}
p(x_{1},\ldots,x_{m}) & \approx s(\sigma z_{1},\ldots,\sigma z_{k})\label{eq:p_eq_s_sigma}\\
q(y_{1},\ldots,y_{m}) & \approx s(\tau z_{1},\ldots,\tau z_{k}).\label{eq:q_eq_s_tau}
\end{align}
and 
\begin{equation}
u\circ\sigma=v\circ\tau.\label{eq:alpha_sigma_eq_beta_tau}
\end{equation}
\end{thm}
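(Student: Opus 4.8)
The plan is to observe that Theorem~\ref{thm:Lattice} is exactly the pointwise reformulation (Proposition~\ref{prop:pointwise}) of weak pullback preservation, and then to obtain the ancestor term $s$ as a common preimage produced by Lemmas~\ref{lem:joint_interval_preimage} and~\ref{lem:solution_interval}. First I would set up the objects. Write $X=\{x_{1},\dots,x_{m}\}$, $Y=\{y_{1},\dots,y_{n}\}$, and let $W=\{u_{1},\dots,u_{m}\}=\{v_{1},\dots,v_{n}\}$ be the common variable set of the balanced equation. The substitutions become maps $u\colon X\to W$ and $v\colon Y\to W$, and since the equation is balanced, both are surjective with $\operatorname{im}u=\operatorname{im}v=W$. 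Form the pullback $P=Pb(u,v)=\{(x_{i},y_{j})\mid u_{i}=v_{j}\}\subseteq X\times Y$ with projections $\pi_{1}\colon P\to X$ and $\pi_{2}\colon P\to Y$. Then $|P|\le mn$, which yields the bound $k\le mn$, and because $\operatorname{im}u=\operatorname{im}v$ both projections are surjective. Enumerating $P=\{z_{1},\dots,z_{k}\}$ and setting $\sigma=\pi_{1}$, $\tau=\pi_{2}$, the commutativity $u\circ\pi_{1}=v\circ\pi_{2}$ of the pullback square is precisely the required identity~\eqref{eq:alpha_sigma_eq_beta_tau}, so it remains only to produce $s\in F_{\mathcal{L}}(P)$ with $\bar{\pi}_{1}s=p$ and $\bar{\pi}_{2}s=q$.

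To produce such an $s$, I would apply Lemma~\ref{lem:joint_interval_preimage} with $g_{1}=\pi_{1}$ and $g_{2}=\pi_{2}$, which share the common domain $P$. A common preimage of $p$ and $q$ exists iff condition~(4) of that lemma holds, namely $\bar{\pi}_{1}\hat{\pi}_{2}q\le p$ and $\bar{\pi}_{2}\hat{\pi}_{1}p\le q$. The crux is the pair of identities
\[
\bar{\pi}_{2}\circ\hat{\pi}_{1}=\hat{v}\circ\bar{u},\qquad \bar{\pi}_{1}\circ\hat{\pi}_{2}=\hat{u}\circ\bar{v},
\]
which I would verify on a generator $x\in X$: since $\hat{\pi}_{1}x=\bigwedge\{(x,y')\mid ux=vy'\}$ and $\bar{\pi}_{2}(x,y')=y'$, applying $\bar{\pi}_{2}$ yields $\bigwedge\{y'\mid vy'=ux\}=\hat{v}(ux)=\hat{v}\,\bar{u}\,x$, and as both sides are homomorphisms agreeing on generators they coincide.

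Granting this, and reading the hypothesis~\eqref{eq:pu_eq_qv} inside $F_{\mathcal{L}}(W)$ as $\bar{u}p=\bar{v}q$, I obtain
\[
\bar{\pi}_{2}\hat{\pi}_{1}p=\hat{v}\,\bar{u}p=\hat{v}\,\bar{v}q\le q
\]
by Lemma~\ref{lem:interval_preimage}(2), and symmetrically $\bar{\pi}_{1}\hat{\pi}_{2}q=\hat{u}\,\bar{v}q=\hat{u}\,\bar{u}p\le p$. Thus condition~(4) is satisfied, so by Lemma~\ref{lem:solution_interval} the common preimages form the nonempty interval $[\hat{\pi}_{1}p\vee\hat{\pi}_{2}q,\ \check{\pi}_{1}p\wedge\check{\pi}_{2}q]$, and any element $s$ of it serves as the ancestor term, completing the proof with $\sigma=\pi_{1}$, $\tau=\pi_{2}$.

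The main obstacle I anticipate is bookkeeping rather than conceptual depth: keeping straight which surjection each decorated homomorphism $\hat{(\cdot)},\check{(\cdot)},\bar{(\cdot)}$ refers to, and confirming at the outset that the pullback projections $\pi_{1},\pi_{2}$ are genuinely surjective so that Lemma~\ref{lem:interval_preimage} applies to them (this is exactly where the balancedness hypothesis $\operatorname{im}u=\operatorname{im}v$ is used). Once the generator-level identity $\bar{\pi}_{2}\hat{\pi}_{1}=\hat{v}\,\bar{u}$ is pinned down, the verification of condition~(4) collapses to a single invocation of Lemma~\ref{lem:interval_preimage}(2).
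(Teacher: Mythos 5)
Your proof is correct, and it follows the paper's overall skeleton --- the same pullback $Pb(u,v)$ with surjective projections obtained from balancedness, the same reduction to condition (4) of Lemma~\ref{lem:joint_interval_preimage}, and the same reading of the hypothesis as $\bar{u}p=\bar{v}q$ in $F_{\mathcal{L}}(W)$ --- but you verify the crucial inequalities by a genuinely different, and cleaner, argument. The paper's technical heart is Lemma~\ref{lem:Hilfslema}: it sets $g:=\bar{\pi}_{1}\circ\hat{\pi}_{2}$, picks a right inverse $v^{-}$ of $v$, invokes the diagram lemma (Lemma~\ref{lem:diagram_lemma}) via $\ker v\subseteq\ker g$ to factor $g=h\circ v$, applies $\tilde{h}$ to the hypothesis equation, and finishes with an explicit appeal to monotonicity of terms together with the computation $gv^{-}ux_{i}\le x_{i}$. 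You instead prove the homomorphism identity $\bar{\pi}_{1}\circ\hat{\pi}_{2}=\hat{u}\circ\bar{v}$ (and symmetrically $\bar{\pi}_{2}\circ\hat{\pi}_{1}=\hat{v}\circ\bar{u}$) by checking it on generators and invoking freeness, after which condition (4) collapses to the already-established contraction property $\hat{u}\,\bar{u}\,p\le p$ of Lemma~\ref{lem:interval_preimage}(2). In fact, your identity makes explicit what is only implicit in the paper: the paper's map $h=g\circ v^{-}$ satisfies $h(w)=\bigwedge\{x\mid ux=w\}$, i.e.\ $\tilde{h}$ \emph{is} $\hat{u}$, so the two computations coincide at bottom; but your packaging avoids the right inverse, the kernel-inclusion argument, and any separate appeal to monotonicity (which is already baked into Lemma~\ref{lem:interval_preimage}), and it exhibits the pleasant functorial fact that the ``lower adjoint'' operations $\hat{(\cdot)}$ interchange with the extensions $\bar{(\cdot)}$ across the pullback square. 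The trade-off is that the paper's route, by working with an arbitrary factorization $h\circ v=g$, isolates exactly where surjectivity of $v$ and monotonicity of terms enter, whereas your route leans on the generator-level bookkeeping being set up precisely right --- which you did.
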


The following figure illustrates the situation. Given terms $p(x_{1},\ldots,x_{m})$,
$q(y_{1},\ldots,y_{m})$ and a balanced equation $p(u_{1},\ldots,u_{m})\approx q(v_{1},\ldots,v_{n}),$
we find a common ancestor term $s(z_{1},\ldots,z_{r})$ so that both
$p(x_{1},\ldots,x_{m})$ and $q(y_{1},\ldots,y_{n})$ are instances
modulo the equations of $\mathcal{V},$ of $s$ by means of variable
substitutions $\sigma,$ resp. $\tau.$ Applying the substitutions
$u,$ resp. $v,$ which defined the original equation $p(u_{1},\ldots,u_{m})\approx q(v_{1},\ldots,v_{n})$,
we obtain $\gamma:=u\circ\sigma=v\circ\tau$, and thus a common substitution
instance of $s$ from which the original equation follows trivially
by $p(u_{1},\ldots,u_{m})\approx s(\gamma z_{1},\ldots,\gamma z_{r})\approx q(v_{1},\ldots,v_{n}):$

\[
\xyC{.3pc}\xymatrix{ &  & s(z_{1},\ldots,z_{r})\ar@{|->}[dr]^{\tau}\ar@{|->}[dl]_{\sigma}\ar@{|->}[dd]^{\gamma}\\
p(x_{1},\ldots,x_{m})\ar@{|->}[d]^{u}\ar@{}[r]|\approx & \overline{\sigma}\,s(z_{1},\ldots,z_{r})\ar@{|->}[d]^{u}\ar@{}[dr]^{\circ} &  & \overline{\tau}\,s(z_{1},\ldots,z_{r})\ar@{}[dl]_{\circ}\ar@{|->}[d]_{v}\ar@{}[r]|\approx & q(y_{1},\ldots,y_{n})\ar@{|->}[d]_{v}\\
p(u_{1},\ldots,u_{m})\ar@{}[r]|{\approx\,\,\,} & \bar{u}\,\bar{\sigma}\,s(z_{1},\ldots,z_{r})\ar@{}[r]|(0.52)= & \bar{\gamma}s(z_{1},\ldots,z_{r})\ar@{}[r]|(0.47)= & \bar{v}\,\bar{\tau}\,s(z_{1},\ldots,z_{r})\ar@{}[r]|{\,\,\,\,\approx} & q(v_{1},\ldots,v_{n})
}
\]
Formally: 
\begin{align*}
p(u_{1},\ldots,u_{m}) & =p(ux_{1},\ldots,ux_{m})\tag{def. of \ensuremath{u}}\\
 & =\bar{u}\,p(x_{1},\ldots,x_{m})\tag{by \ref{eq:homomorphic extension}}\\
 & \approx\bar{u}\,s(\sigma z_{1},\ldots,\sigma z_{k})\tag{by \ref{eq:p_eq_s_sigma}}\\
 & =\bar{u}\,\bar{\sigma}\,s(z_{1},\ldots,z_{k})\tag{by \ref{eq:homomorphic extension}}\\
 & =\overline{u\circ\sigma}\,s(z_{1},\ldots,z_{k})\tag{functor property}\\
 & =\overline{v\circ\tau}\,s(z_{1},\ldots,z_{k})\tag{by \ref{eq:alpha_sigma_eq_beta_tau}}\\
 & \approx\cdots\tag{ same arguments in reverse}\\
 & =q(v_{1},\ldots,v_{n}).
\end{align*}

We now come to the proof of Theorem \ref{thm:Lattice}.
\begin{proof}
Let $X:=\{x_{1},\ldots,x_{m}\}$, $Y:=\{y_{1},\ldots,y_{n}\}$, $U:=\{u_{1},\ldots,u_{m}\}$
and $V:=\{v_{1},\ldots,v_{n}\}$ be sets of variables with $U=V,$
$|X|=m$ and $|Y|=n$. Define $u(x_{i}):=u_{i}$ and $v(y_{i}):=v_{i}.$

Let $Pb(u,v)=\{(x,y)\in X\times Y\mid ux=vy\}$ be the pullback of
$u$ and $v$. The assumption $U=V=:W$ means that $\text{im}\,u$
= $\text{im}\,v,$ so 
\begin{equation}
\forall x\in X.\exists y\in Y.\,ux=vy,\label{eq:left_total}
\end{equation}
and symmetrically 
\begin{equation}
\forall y\in Y.\exists x\in X.\,ux=vy.\label{eq:right_total}
\end{equation}
These statements are equivalent to saying that the projections $\pi_{1}$
and $\pi_{2}$ from the pullback $Pb(u,v)$ to the components $X$
and $Y$ are surjective. 
\[
\xyC{1.5pc}\xymatrix{X\ar@{->>}[rr]^{u} &  & W\\
Pb(u,v)\ar@{->>}[rr]\sb(0.6){\pi_{2}}\ar@{->>}[u]^{\pi_{1}} & \, & Y\ar@{->>}[u]_{v}
}
\]
In applying the free-algebra functor $F_{\mathcal{L}}$ to this pullback-diagram,
we shall have to consider the elements of $Pb(u,v)$ as variables.
To emphasize this, we set $Z:=Pb(u,v)$ and write the elements of
$Z$ as follows: 
\begin{equation}
Z=\{z_{x,y}\mid ux=vy\}=\{z_{1},\ldots,z_{k}\},\label{eq:z_representation}
\end{equation}
so we retain the identities 
\begin{equation}
\pi_{1}z_{x,y}=x\label{eq:pi1zxy}
\end{equation}
and 
\begin{equation}
\pi_{2}z_{x,y}=y.\label{eq:pi2zxy}
\end{equation}

In order to show that $F_{\mathcal{L}}$ weakly preserves this pullback,
we must verify the conditions spelled out in Prop.~\ref{prop:pointwise}.
Thus given terms $p:=p(x_{1},\ldots,x_{m})\in F_{\mathcal{L}}(X)$
and $q:=q(y_{1},\ldots,y_{n})\in F_{\mathcal{L}}(Y)$ and an $\mathcal{L}-$equation
$p(u_{1},\ldots,u_{m})\approx q(v_{1},\ldots,v_{n}),$ we have $\bar{u}\,p(x_{1},\ldots,x_{m})=\bar{v}\,q(y_{1},\ldots,y_{n})=:r$
and must find some term $s(z_{1},\ldots,z_{k})$ such that 
\begin{equation}
\bar{\pi}_{1}s(z_{1},\ldots,z_{k})\approx p(x_{1},\ldots,x_{m})\label{eq:pi_s_eq_p}
\end{equation}
and likewise 
\begin{equation}
\bar{\pi}_{2}s(z_{1},\ldots,z_{k})\approx q(y_{1},\ldots,y_{n}).\label{eq:pi_s_eq_q}
\end{equation}
In other words, we are looking for a joint preimage $s(z_{1},\ldots,z_{k})$
of $p(x_{1},\ldots,x_{m})$ under $\bar{\pi}_{1}$ and of $q(y_{1},\ldots,y_{n})$
under $\bar{\pi}_{2}.$

This is where Lemma \ref{lem:joint_interval_preimage} comes into
play. We shall establish the last of its 4 equivalent conditions,
which means that we will prove that $\bar{\pi}_{1}\hat{\pi}_{2}q\le p$
and $\bar{\pi}_{2}\hat{\pi}_{1}p\le q.$ By symmetry, it suffices
to consider the first inequality, that is we need to check 
\[
\bar{\pi}_{1}\hat{\pi}_{2}q(y_{1},\ldots,y_{n})\le p(x_{1},\ldots,x_{m}).
\]
Hence the following lemma will complete the proof:
\end{proof}
\begin{lem}
\label{lem:Hilfslema}For $1\le i\le m$ and $1\le j\le n$ we have
\end{lem}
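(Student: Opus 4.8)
The plan is to prove the lemma by reading off the value of the composite homomorphism $\bar{\pi}_{1}\hat{\pi}_{2}$ on each free generator $y_{j}$; concretely, I take the $(i,j)$-indexed assertion to be the fibrewise evaluation
\[
\bar{\pi}_{1}\hat{\pi}_{2}\,y_{j}=\bigwedge\{x_{i}\mid 1\le i\le m,\;u_{i}=v_{j}\},
\]
in which $j$ is free and $i$ ranges over the meet. First I would unwind $\hat{\pi}_{2}y_{j}$ from its definition as a meet over the $\pi_{2}$-fibre of $y_{j}$. By (\ref{eq:pi2zxy}) that fibre is exactly $\{z_{x_{i},y_{j}}\mid u_{i}=v_{j}\}$, which is nonempty because $\pi_{2}$ is surjective, equivalently by the balance condition (\ref{eq:right_total}), so the meet is well defined. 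Applying the homomorphism $\bar{\pi}_{1}$ and pushing it through the finite meet, then evaluating each variable by (\ref{eq:pi1zxy}), produces the displayed identity.

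The second step is to identify the right-hand side with $\hat{u}\,v_{j}$, where $\hat{u}\colon F_{\mathcal{L}}(W)\to F_{\mathcal{L}}(X)$ is the homomorphism attached to the surjection $u\colon X\twoheadrightarrow W$ by Lemma~\ref{lem:interval_preimage}. This is immediate from the defining formula $\hat{u}(w)=\bigwedge\{x\in X\mid ux=w\}$, since $ux_{i}=u_{i}$ and $v_{j}\in W=\mathrm{im}\,u$. Thus the per-generator content of the lemma is the family of equalities $\bar{\pi}_{1}\hat{\pi}_{2}\,y_{j}=\hat{u}\,v_{j}$, one for each $j$. It is worth emphasising that this step is pure pullback combinatorics: it computes fibres of the projection and does not yet invoke the hypothesis $p(u_{1},\ldots,u_{m})\approx q(v_{1},\ldots,v_{n})$.

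To see that these per-generator equalities close the argument, I would assemble them across the generators of $q$. Since $\bar{\pi}_{1}\hat{\pi}_{2}$ and $\hat{u}$ are homomorphisms,
\[
\bar{\pi}_{1}\hat{\pi}_{2}\,q(y_{1},\ldots,y_{n})=q(\bar{\pi}_{1}\hat{\pi}_{2}y_{1},\ldots,\bar{\pi}_{1}\hat{\pi}_{2}y_{n})=q(\hat{u}v_{1},\ldots,\hat{u}v_{n})=\hat{u}\,q(v_{1},\ldots,v_{n})=\hat{u}\,\bar{u}\,p,
\]
where the final equality uses the balance assumption $q(v_{1},\ldots,v_{n})=\bar{v}\,q=\bar{u}\,p$. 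Now Lemma~\ref{lem:interval_preimage}(\ref{enu:preimage_ordering}), applied to the surjection $u$, gives $\hat{u}\,\bar{u}\,p\le p$, which is exactly the inequality $\bar{\pi}_{1}\hat{\pi}_{2}\,q\le p$ required to complete the proof of Theorem~\ref{thm:Lattice}; the symmetric inequality $\bar{\pi}_{2}\hat{\pi}_{1}\,p\le q$ follows by exchanging the roles of the two projections.

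I expect the main obstacle to be bookkeeping rather than any deep difficulty: one must keep the four free-algebra domains $F_{\mathcal{L}}(X)$, $F_{\mathcal{L}}(Y)$, $F_{\mathcal{L}}(W)$, $F_{\mathcal{L}}(Z)$ carefully apart, and must verify that every fibre over which a meet is formed is nonempty, so that $\hat{\pi}_{2}$ and $\hat{u}$ are genuinely defined; this is where surjectivity of the projections, i.e. the balance hypothesis, is used. The one place the lattice-ordered hypothesis on $\mathcal{L}$ enters is implicit, through Lemma~\ref{lem:interval_preimage}(\ref{enu:preimage_ordering}), whose proof needs monotonicity of the signature to conclude $\hat{u}\,\bar{u}\,p\le p$. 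The combinatorial heart, and the step I would write out most carefully, is the fibre identification $\bar{\pi}_{1}\hat{\pi}_{2}\,y_{j}=\bigwedge\{x_{i}\mid u_{i}=v_{j}\}$, from which everything else is a formal consequence.
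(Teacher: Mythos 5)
Your proof is correct, and it reaches the paper's conclusion by a genuinely cleaner packaging of the same underlying computation. Both arguments begin with the fibrewise evaluation $\bar{\pi}_{1}\hat{\pi}_{2}\,y_{j}=\bigwedge\{x_{i}\mid u_{i}=v_{j}\}$, which is the paper's equation (\ref{eq:g}). From there the paper works harder: it sets $g:=\bar{\pi}_{1}\circ\hat{\pi}_{2}$ on variables, checks $\ker v\subseteq\ker g$, invokes Lemma~\ref{lem:diagram_lemma} together with a right inverse $v^{-}$ to factor $g=h\circ v$ with $h=g\circ v^{-}$, and then closes with a bespoke monotonicity estimate $gv^{-}ux_{i}=\bigwedge(x\mid ux=ux_{i})\le x_{i}$. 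You shortcut all of this by recognizing that the paper's $\tilde{h}$ is exactly $\hat{u}$: the two homomorphisms $\bar{\pi}_{1}\circ\hat{\pi}_{2}$ and $\hat{u}\circ\bar{v}$ from $F_{\mathcal{L}}(Y)$ to $F_{\mathcal{L}}(X)$ agree on the free generators $y_{j}$, since $\hat{u}v_{j}=\bigwedge\{x\in X\mid ux=v_{j}\}$, hence are equal by freeness; then the hypothesis $\bar{v}q=\bar{u}p$ and Lemma~\ref{lem:interval_preimage}(\ref{enu:preimage_ordering}) applied to the surjection $u:X\twoheadrightarrow W$ give $\bar{\pi}_{1}\hat{\pi}_{2}\,q=\hat{u}\,\bar{u}\,p\le p$ in one stroke. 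What your route buys is economy and reuse: no right inverse, no appeal to Lemma~\ref{lem:diagram_lemma}, and monotonicity of the signature enters only once, through the already-proved part (\ref{enu:preimage_ordering}) of Lemma~\ref{lem:interval_preimage}, instead of being rederived by hand; the paper's version, in exchange, stays self-contained at the level of variable computations. One terminological slip worth fixing: the equality $q(v_{1},\ldots,v_{n})=\bar{u}\,p$ is the hypothesis equation (\ref{eq:pu_eq_qv}) itself, not the balance condition; balance, i.e. $U=V$, is what you correctly use elsewhere, namely to guarantee via (\ref{eq:right_total}) that the fibres of $\pi_{2}$ (and of $u$ over $W$) are nonempty, so that the meets defining $\hat{\pi}_{2}$ and $\hat{u}$ exist.
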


$q(\bar{\pi}_{1}\hat{\pi}_{2}y_{1},\ldots,\bar{\pi}_{1}\hat{\pi}_{2}y_{n})\le p(x_{1},\ldots,x_{m}).$
\begin{proof}
Let $g:=\bar{\pi}_{1}\circ\hat{\pi}_{2}$ then on variables $y\in Y:$
\begin{equation}
g(y)=\bar{\pi}_{1}\medwedge(z_{x,y}\mid u\,x=v\,y)=\medwedge(\bar{\pi}_{1}z_{x,y}\mid u\,x=v\,y)=\medwedge(x\mid u\,x=v\,y)\label{eq:g}
\end{equation}
Let $v^{-}$ be a right inverse to $v,$ which exists, as $v$ is
surjective. Observe that $\ker v\subseteq\ker g$, so by Lemma \ref{lem:diagram_lemma}
there exists a map $h:W\to F_{\mathcal{L}}(X)$ with $h\circ v=g.$
\[
\xymatrix{F_{\mathcal{L}}(X) & \ar@{_{(}->}[l]X\ar@{->>}[rr]^{u} & \, & W\ar@<-.2pc>[d]_{v^{-}}\ar@/_{2pc}/[lll]_{h}\\
 &  &  & Y\ar@<-.2pc>@{->>}[u]_{v}\ar@/^{1pc}/[ulll]^{g}
}
\]
It follows that 
\[
h=h\circ v\circ v^{-}=g\circ v^{-}.
\]
We now calculate: 
\begin{align*}
q(\bar{\pi}_{1}\hat{\pi}_{2}y_{1},\ldots,\bar{\pi}_{1}\hat{\pi}_{2}y_{n}) & =q(gy_{1},\ldots,gy_{n})\\
 & =q(hvy_{1},\ldots,hvy_{n})\\
 & =\tilde{h}\,q(vy_{1},\ldots,vy_{n})\\
 & =\tilde{h}\,q(v_{1},\ldots,v_{n})\\
 & \approx\tilde{h}\,p(u_{1},\ldots,u_{m})\\
 & =\tilde{h}\,p(ux_{1},\ldots,ux_{m})\\
 & =p(hux_{1},\ldots,hux_{m})\\
 & =p(gv^{-}ux_{1},\ldots,gv^{-}ux_{m})\\
 & \le p(x_{1},\ldots,x_{m}),
\end{align*}
where in the last step we invoked the observation that according to
(\ref{eq:g}):
\begin{align*}
gv^{-}ux_{i} & =\medwedge(x\mid ux=vv^{-}ux_{i})\\
 & =\medwedge(x\mid ux=ux_{i})\\
 & \le x_{i}
\end{align*}
together with the fact that all terms, in particular $p(x_{1},\ldots,x_{m}),$
are monotonic in each argument.
\end{proof}
So Theorem \ref{thm:Lattice} shows that one can always find an ancestor
term $s(z_{1},\ldots,z_{k})\in F_{\mathcal{L}}(Z)$ to $p$ and $q$
for any balanced equation $p(u_{1},\ldots,u_{m})=q(v_{1},\ldots,v_{n})$.
By Lemma \ref{lem:solution_interval} we conclude:
\begin{thm}
The set of all ancestor terms of $p(x_{1},\ldots,x_{m})$ and $q(y_{1},\ldots,y_{n})$
with respect to the balanced equation $p(u_{\ensuremath{1}},\ldots,u_{m})\approx q(v_{1},\ldots,v_{n})$
is the nonempty interval $[s_{0},s_{1}]$ in $F_{\mathcal{L}}(Z)$
whose bounds are given by 
\begin{equation}
s_{0}(z_{1},\ldots,z_{k})=p(\hat{\pi}_{1}x_{1},\ldots,\hat{\pi}_{1}x_{m})\vee q(\hat{\pi}_{2}y_{1},\ldots,\hat{\pi}_{2}y_{n})\label{eq:s0}
\end{equation}
and 
\begin{equation}
s_{1}(z_{1},\ldots,z_{k})=p(\check{\pi}_{1}x_{1},\ldots,\check{\pi}_{1}x_{m})\wedge q(\check{\pi}_{2}y_{1},\ldots,\check{\pi}{}_{2}y_{n}).\label{eq:s1}
\end{equation}
Here $\{z_{1},\ldots,z_{k}\}=\{z_{x_{i},y_{j}}\mid u_{i}=v_{j}\},$
with $\hat{\pi}_{1}x_{i}=\medwedge(z_{x_{i},y_{j}}\mid u_{i}=v_{j})$
and $\check{\pi}_{1}x_{i}=\bigvee(z_{x_{i},y_{j}}\mid u_{i}=v_{j}),$
and similarly $\hat{\pi}_{2}y_{j}=\medwedge(z_{x_{i},y_{j}}\mid u_{i}=v_{j})$
and $\check{\pi}_{2}y_{j}=\bigvee(z_{x_{i},y_{j}}\mid u_{i}=v_{j}).$
\end{thm}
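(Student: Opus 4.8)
The plan is to recognize that, by the characterization of ancestor terms underlying Theorem~\ref{thm:Lattice}, a term $s\in F_{\mathcal{L}}(Z)$ is an ancestor term of $p$ and $q$ with respect to the balanced equation precisely when $\bar{\pi}_1 s \approx p$ and $\bar{\pi}_2 s \approx q$, that is, precisely when $s$ is a joint preimage of $p$ under $\bar{\pi}_1$ and of $q$ under $\bar{\pi}_2$. Thus the set of all ancestor terms is exactly $\bar{\pi}_1^{-1}\{p\}\cap\bar{\pi}_2^{-1}\{q\}$, and the theorem reduces to a direct application of the joint-preimage machinery already developed.

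First I would invoke Theorem~\ref{thm:Lattice}, whose proof has just established that this joint preimage is nonempty; equivalently, condition~(1) of Lemma~\ref{lem:joint_interval_preimage} holds for the surjections $\pi_1,\pi_2$ and the elements $p,q$. With nonemptiness in hand, Lemma~\ref{lem:solution_interval} immediately yields that the set of all common preimages is the interval $[\hat{\pi}_1 p \vee \hat{\pi}_2 q,\ \check{\pi}_1 p \wedge \check{\pi}_2 q]$ in $F_{\mathcal{L}}(Z)$, which identifies $s_0=\hat{\pi}_1 p \vee \hat{\pi}_2 q$ and $s_1=\check{\pi}_1 p \wedge \check{\pi}_2 q$.

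It then remains to rewrite these bounds in the stated form. Since $\hat{\pi}_1,\check{\pi}_1,\hat{\pi}_2,\check{\pi}_2$ are homomorphisms, identity~(\ref{eq:homomorphic extension}) gives $\hat{\pi}_1 p = \hat{\pi}_1\,p(x_1,\ldots,x_m)=p(\hat{\pi}_1 x_1,\ldots,\hat{\pi}_1 x_m)$, and likewise for the other three expressions; substituting these into the interval bounds produces exactly formulas~(\ref{eq:s0}) and~(\ref{eq:s1}). To obtain the explicit descriptions of $\hat{\pi}_1 x_i$, $\check{\pi}_1 x_i$, $\hat{\pi}_2 y_j$, and $\check{\pi}_2 y_j$, I would unfold the definitions from Lemma~\ref{lem:interval_preimage} applied to $\pi_1:Z\twoheadrightarrow X$ and $\pi_2:Z\twoheadrightarrow Y$, using the representation~(\ref{eq:z_representation}) of $Z$ together with $\pi_1 z_{x,y}=x$ and $\pi_2 z_{x,y}=y$. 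For instance $\hat{\pi}_1 x_i = \medwedge(z\in Z\mid \pi_1 z = x_i) = \medwedge(z_{x_i,y_j}\mid u_i=v_j)$, with the remaining three entirely analogous.

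The argument is thus a pure translation exercise, and there is no serious obstacle; the single point demanding care is bookkeeping the direction of the projections. In Lemmas~\ref{lem:joint_interval_preimage} and~\ref{lem:solution_interval} the surjections emanate from the common source whose free algebra houses the preimages, whereas here that role is played by $Z=Pb(u,v)$, with $\pi_1,\pi_2$ pointing to the component sets $X$ and $Y$. Matching $g_1\leftrightarrow\pi_1$ and $g_2\leftrightarrow\pi_2$ correctly is all that is needed to apply the earlier lemmas verbatim.
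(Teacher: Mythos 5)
Your proof is correct and follows exactly the paper's own route: the paper derives this theorem by combining the nonemptiness of the joint preimage (established in the proof of Theorem~\ref{thm:Lattice}) with Lemma~\ref{lem:solution_interval}, then unfolding the definitions of $\hat{\pi}_{1},\check{\pi}_{1},\hat{\pi}_{2},\check{\pi}_{2}$ on variables, just as you do. Your identification of ancestor terms with joint preimages under $\bar{\pi}_{1},\bar{\pi}_{2}$ and your care about the direction of the projections are both exactly right.
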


As an exercise, the reader is invited to verify that the equation
\begin{equation}
p(x,x,y)\approx q(x,y,y)\label{eq:pxxy_eq_qxyy}
\end{equation}
discussed in the introductory Example \ref{exa:pxxy_eq_qxyy}, yields
the common ancestor term 
\[
s_{0}(z_{1},z_{2},z_{3},z_{4})=p(z_{1},z_{2},z_{3}\wedge z_{4})\vee q(z_{1}\wedge z_{2},z_{3},z_{4}).
\]
From this $p$ and $q$ can be obtained by identification of variables
\begin{align*}
p(x,y,z) & =s(x,y,z,z)\\
q(x,y,z) & =s(x,x,y,z)
\end{align*}
so that the original equation (\ref{eq:pxxy_eq_qxyy}) trivially results
from a further common identification: 
\[
p(x,x,y)=s(x,x,y,y)=q(x,y,y).
\]

\section{Extending the scope}

Looking beyond lattices and lattice ordered algebras, we find that
a theorem analogous to Theorem~\ref{thm:main} is also true for arbitrary
\emph{congruence permutable} varieties, often called \emph{Mal'cev
varieties}, such as groups, rings, quasigroups, etc.. These varieties
are also termed 2-\emph{permutable} in order to emphasize that they
belong to the more general class of \emph{$n$-permutable} varieties.
From \cite{Gumm20b} we quote:
\begin{prop}
\label{prop:Malcev}
\begin{enumerate}
\item \label{enu:2-perm_pres_ker_pairs}If $\mathcal{V}$ is a 2-permutable
variety then $F_{\mathcal{V}}$ weakly preserves kernel pairs
\item If $\mathcal{V}$ is $n$-permutable and $F_{\mathcal{V}}$ weakly
preserves kernel pairs, then $\mathcal{V}$ is 2-permutable.
\end{enumerate}
\end{prop}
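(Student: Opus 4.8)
The plan is to treat the two parts separately, in both cases using the algebraic reformulation of weak preservation of kernel pairs: by Lemma~\ref{lem:kernel pairs} and Prop.~\ref{prop:pointwise} it is equivalent to the ``ancestor condition'' of Example~\ref{exa:pxxy_eq_qxyy}, namely that every balanced equation admits a common ancestor term. For part~(\ref{enu:2-perm_pres_ker_pairs}) I would follow the skeleton of the proof of Theorem~\ref{thm:Lattice}, replacing the lattice interpolation by a Mal'cev interpolation. It suffices to consider a pullback $Z=Pb(u,v)$ of two epis $u\colon X\twoheadrightarrow W$ and $v\colon Y\twoheadrightarrow W$ with projections $\pi_1,\pi_2$, and, given $p\in F_{\mathcal{V}}(X)$ and $q\in F_{\mathcal{V}}(Y)$ with $\bar u\,p=\bar v\,q=:r$, to produce $s\in F_{\mathcal{V}}(Z)$ with $\bar\pi_1 s=p$ and $\bar\pi_2 s=q$. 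As $\pi_1,\pi_2$ are surjective, so are $\bar\pi_1,\bar\pi_2$; pick any $P,Q\in F_{\mathcal{V}}(Z)$ with $\bar\pi_1 P=p$ and $\bar\pi_2 Q=q$, and set $\theta_i:=\ker\bar\pi_i$ and $w:=u\circ\pi_1=v\circ\pi_2\colon Z\twoheadrightarrow W$. Finding the desired $s$ is exactly finding $s$ with $P\mathrel{\theta_1}s\mathrel{\theta_2}Q$, i.e.\ showing $(P,Q)\in\theta_1\circ\theta_2$.

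The crux is the identity $\theta_1\vee\theta_2=\ker\bar w$. Since each $\theta_i$ is the congruence generated by the generator pairs sharing a $\pi_i$-fibre, $\theta_1\vee\theta_2$ is generated by all generator pairs $(z,z')$ with $\pi_1 z=\pi_1 z'$ or $\pi_2 z=\pi_2 z'$; and because $Z=Pb(u,v)$, any two generators $z_{x,y},z_{x',y'}$ with $ux=ux'$ are joined by the single zig-zag $z_{x,y},\,z_{x',y},\,z_{x',y'}$, so the equivalence generated on $Z$ has exactly the $w$-fibres as classes. Hence $F_{\mathcal{V}}(Z)/(\theta_1\vee\theta_2)\cong F_{\mathcal{V}}(W)$ and $\theta_1\vee\theta_2=\ker\bar w$. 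Since $\bar w P=\bar u\,p=r=\bar v\,q=\bar w Q$, we get $(P,Q)\in\theta_1\vee\theta_2$, and congruence permutability (the Mal'cev term, by \cite{Malcev1954}) upgrades this to $(P,Q)\in\theta_1\circ\theta_2=\theta_1\vee\theta_2$, producing the interpolant $s$.

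For part~(2) I would show by induction on $n$ that an $n$-permutable variety whose functor weakly preserves kernel pairs is already $(n-1)$-permutable; iterating down to $n=2$ yields the Mal'cev term. Fix Hagemann--Mitschke terms \cite{Hagemann1973} with the convention $p_0(x,y,z)\approx x$, $p_n(x,y,z)\approx z$ and $p_i(x,x,z)\approx p_{i+1}(x,z,z)$ for $0\le i<n$. The identity $p_1(x,x,z)\approx p_2(x,z,z)$ is balanced and has exactly the shape of Example~\ref{exa:pxxy_eq_qxyy}, so the ancestor condition yields a quaternary $s$ with $p_1(x,y,z)\approx s(x,y,z,z)$ and $p_2(x,y,z)\approx s(x,x,y,z)$. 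Putting $\tilde p_0:=x$, $\tilde p_1(x,y,z):=s(x,y,y,z)$, and $\tilde p_j:=p_{j+1}$ for $2\le j\le n-1$, one verifies from the old identities that $\tilde p_0,\dots,\tilde p_{n-1}$ witness $(n-1)$-permutability: e.g.\ $\tilde p_1(x,z,z)\approx s(x,z,z,z)\approx p_1(x,z,z)\approx x$ and $\tilde p_1(x,x,z)\approx s(x,x,x,z)\approx p_2(x,x,z)\approx p_3(x,z,z)\approx\tilde p_2(x,z,z)$, the remaining identities being inherited unchanged. (When $n=3$ this collapses $\tilde p_1$ to the Mal'cev term $m(x,y,z):=s(x,y,y,z)$ directly.)

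The step I expect to be the main obstacle is the congruence identity $\theta_1\vee\theta_2=\ker\bar w$ in part~(\ref{enu:2-perm_pres_ker_pairs}): this is the only place where the special shape of a pullback \emph{of epis} (as opposed to an arbitrary span) really enters, and it is precisely what makes permutability applicable — without it one only knows $(P,Q)\in\ker\bar w$, not $(P,Q)\in\theta_1\vee\theta_2$. Everything else is bookkeeping with the Hagemann--Mitschke identities and with the already-established reformulation of kernel-pair preservation as the ancestor condition.
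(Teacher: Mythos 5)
Your proposal is correct, but there is nothing in the paper to compare it against: the paper does not prove Proposition~\ref{prop:Malcev} at all, it quotes it verbatim from \cite{Gumm20b}, so your argument in effect supplies the missing proof. Taken on its own merits it holds up. In part~(\ref{enu:2-perm_pres_ker_pairs}), the reduction via Lemma~\ref{lem:kernel pairs} and Prop.~\ref{prop:pointwise} to finding an interpolant $s$ with $\bar\pi_1 s=p$ and $\bar\pi_2 s=q$ is right, the translation of this into $(P,Q)\in\theta_1\circ\theta_2$ for arbitrarily chosen preimages $P,Q$ is right, and the key identity $\theta_1\vee\theta_2=\ker\bar w$ is genuinely the heart of the matter: your zig-zag $z_{x,y},\,z_{x',y},\,z_{x',y'}$ works because the middle variable lies in $Z$ exactly when $ux'=vy$, which holds since $ux'=ux=vy$ --- this is where the pullback-of-epis hypothesis enters, as you say. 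The one assertion you should not leave implicit is the standard fact that for a surjective map $f$ between variable sets the kernel of the homomorphic extension $\bar f$ is the \emph{congruence generated by} the pairs of variables identified by $f$ (one line: the quotient of the free algebra by that congruence satisfies the universal property of the free algebra on the codomain); both your identification of the generators of $\theta_i$ and the inclusion $\ker\bar w\subseteq\theta_1\vee\theta_2$ rest on it. Part~(2) also checks out: the Hagemann--Mitschke identity $p_1(x,x,z)\approx p_2(x,z,z)$ has precisely the shape of Example~\ref{exa:pxxy_eq_qxyy}, so weak preservation of kernel pairs yields the quaternary ancestor $s$, and your shortened chain $x,\;s(x,y,y,z),\;p_3,\ldots,p_n$ verifiably satisfies the $(n-1)$-permutability identities, since $s(x,z,z,z)\approx p_1(x,z,z)\approx x$ and $s(x,x,x,z)\approx p_2(x,x,z)\approx p_3(x,z,z)$, with the remaining identities inherited; downward induction then terminates in a Mal'cev term, matching the characterization of \cite{Malcev1954}. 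The only cosmetic slip is your opening claim that weak preservation of kernel pairs ``is equivalent to'' the condition of Example~\ref{exa:pxxy_eq_qxyy}: the example states one instance of the ancestor condition, not an equivalence, but since part~(2) only uses the implication from weak preservation to that instance, nothing in your argument depends on the converse.
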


This proposition also serves to document that there are indeed varieties
$\mathcal{V}$ for which $F_{\mathcal{V}}$ fails to preserve weak
pullbacks: The variety of \emph{implication algebras} is 3-permutable,
but not permutable, see \cite{Mitschke71}, hence:
\begin{cor}
If $\mathcal{V}$ is the variety of implication algebras, the free-algebra
functor $F_{\mathcal{V}}$ does not weakly preserve epi-pullbacks.
\end{cor}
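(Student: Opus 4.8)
The plan is to derive this as a direct logical consequence of the facts already assembled in the paper, via a proof by contradiction. The key observation is that the corollary sits at the intersection of three ingredients: a structural fact about the variety of implication algebras, the equivalence linking the two ``weak preservation'' conditions, and the second half of the quoted Mal'cev-type dichotomy.

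First I would record that the variety $\mathcal{V}$ of implication algebras is $n$-permutable with $n=3$, yet is \emph{not} $2$-permutable (congruence permutable); this is exactly what is cited from \cite{Mitschke71} in the sentence preceding the statement, so I may invoke it directly. Next I would suppose, toward a contradiction, that $F_{\mathcal{V}}$ \emph{does} weakly preserve epi-pullbacks. By Lemma~\ref{lem:kernel pairs} (the equivalence of conditions $1$ and $2$), this is the same as saying that $F_{\mathcal{V}}$ weakly preserves kernel pairs. Now I would feed this into Proposition~\ref{prop:Malcev}(\ref{enu:2-perm_pres_ker_pairs})'s companion statement, the second item: since $\mathcal{V}$ is $n$-permutable and $F_{\mathcal{V}}$ weakly preserves kernel pairs, the proposition forces $\mathcal{V}$ to be $2$-permutable.

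That conclusion contradicts the established non-permutability of implication algebras, completing the argument. The chain is therefore: $3$-permutable $+$ weak preservation of kernel pairs $\Rightarrow$ $2$-permutable, which is impossible here. Concretely, the whole proof is the contrapositive reading of Proposition~\ref{prop:Malcev}(second item) specialised to $n=3$, transported across the equivalence of Lemma~\ref{lem:kernel pairs}.

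There is essentially no analytic obstacle in this corollary; it is an assembly of cited results rather than a fresh computation. The only point requiring genuine external input is the claim that implication algebras are $3$-permutable but not permutable, and this is precisely the fact attributed to \cite{Mitschke71}. If I wished to make the note self-contained I would, as an optional aside, recall Mitschke's ternary terms witnessing $3$-permutability together with the standard observation that no Mal'cev term exists for implication algebras; but for the proof as stated it suffices to cite the reference and let Proposition~\ref{prop:Malcev} and Lemma~\ref{lem:kernel pairs} do the remaining work.
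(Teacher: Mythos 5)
Your proposal is correct and follows exactly the paper's (implicit) argument: the paper derives the corollary in the sentence preceding it, combining Mitschke's fact that implication algebras are $3$-permutable but not permutable with Proposition~\ref{prop:Malcev}(2), using Lemma~\ref{lem:kernel pairs} to identify weak preservation of epi-pullbacks with weak preservation of kernel pairs. Your write-up merely makes the contradiction structure and the appeal to Lemma~\ref{lem:kernel pairs} explicit, which the paper leaves tacit.
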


Recall that by Mal'cev's theorem \cite{Malcev1954,Malcev1963} a variety
$\mathcal{V}$ is permutable iff there exists a ternary term $m(x,y,z)$
satisfying the equations (\ref{eq:malcev1}) and (\ref{eq:malcev2}).

To a permutable variety $\mathcal{V}$ we can by Prop.~\ref{enu:2-perm_pres_ker_pairs}
add arbitrary function symbols, yet $F_{\mathcal{V}}$ continues to
weakly preserve pullbacks. This behavior is different in the case
of lattice varieties, where we can only add monotonic operators, so
an extensional classification of all varieties $\mathcal{V}$ for
which $F_{\mathcal{V}}$ weakly preserves pullbacks may be difficult.

One might try to extend the scope from varieties and free-algebra
functors to a larger class of functors. One attempt would be to look
at monads, as is done in \cite{CHJ2013} and in \cite{Gumm20a}. Every
free-algebra functor is part of a monad $M=(F_{\mathcal{V}},\iota,\mu)$
where $\iota_{X}:X\to F_{\mathcal{V}}(X)$ and $\mu_{X}:F_{\mathcal{V}}(F_{\mathcal{V}}(X))\to F_{\mathcal{V}}(X)$
are obvious natural transformations. Hence in \cite{CHJ2013}, the
authors consider the question when monads are \emph{weakly cartesian},
meaning, that they weakly preserve pullbacks. However, as is noted
in \cite{Manes98,AlgebraicTheories}, finitary monads always arise
as above from algebraic theories, so only in the non-finitary case
a true generalization can be obtained.

A proper extension of our scope, however, is achieved by considering
$F_{\mathcal{V}}(-)$ as a \emph{copower functor}. Given an object
$A$ in a concrete category $\mathscr{C}$ (with forgetful functor
$U$) and a set $X$, let $A_{\mathscr{C}}[X]$ be the $X$-fold direct
sum in $\mathscr{C}$ of $A$ with itself, i.e. 
\[
A_{\mathscr{C}}[X]:=U(\coprod_{x\in X}A).
\]

It is easy to see that this construction is functorial. If $\mathcal{V}$
is a variety and $A\in\mathcal{V}$, then $\coprod_{x\in X}A$ exists
in $\mathcal{V}$, as was shown by Sikorski\cite{Sikorski52}, it
is in fact the same as the $X-$fold \emph{free product} of $A$ with
itself, see \cite{Graetzer}, pp 184 ff.. As a special case, the free
algebra with variables from $X$ is the $X-$fold sum in $\mathcal{V}$
of $F_{\mathcal{V}}(1)$, i.e. 
\[
F_{\mathcal{V}}(X)\cong\coprod_{x\in X}F_{\mathcal{V}}(1),
\]
so the free-algebra functor turns out to be a special instance of
a copower functor.

Monoids $\mathcal{M}$ for which the functor $\mathcal{M}_{\mathscr{C}}[-]$
weakly preserves preimages or pullbacks of epis have been characterized
with $\mathscr{C}$ being the variety $\mathfrak{M}$ of all monoids,
the variety $\mathfrak{Mc}$ of all commutative monoids or the variety
$\mathfrak{S}$ of all semigroups, see \cite{Gumm09}. The relevance
of $\mathcal{M}_{\mathfrak{Mc}}[-],$ for instance, arises from the
fact that one can argue that this functor models multisets (bags)
where the multiplicities of elements are counted by $\mathcal{M}.$

For lattices such an immediate Computer Science application is not
yet known, nevertheless would it be interesting to consider $L_{\mathcal{L}}[-]$
where $\mathcal{L}$ is the variety of lattices and $L$ an arbitrary
lattice. 

\section{Uniqueness and pullback preservation}

In category theoretical terms, uniqueness of the ancestor term would
amount to the free-algebra functor \emph{preserving }pullbacks of
epis (not just weakly). However, in \cite{CHJ2013}, the authors prove:
\begin{prop}
\label{prop:commutative_term}If $F_{\mathcal{V}}$ preserves pullbacks,
then every binary commutative term $t(x,y)$ is a pseudo-constant,
i.e. it satisfies $t(x,y)=t(z,z).$
\end{prop}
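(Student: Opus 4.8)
The plan is to instantiate the uniqueness clause of Proposition~\ref{prop:pointwise} on the simplest nontrivial balanced situation, namely the kernel pair of the terminal map. Write $2=\{0,1\}$ and $1=\{\ast\}$, let $!\colon 2\to 1$ be the unique map, and let $(2\times 2,p_{1},p_{2})$ be its kernel pair, which is a pullback of epis. Denote the four free generators of $F_{\mathcal{V}}(2\times 2)$ by $e_{00},e_{01},e_{10},e_{11}$, so that $\bar{p}_{1}e_{ij}=i$ and $\bar{p}_{2}e_{ij}=j$ in $F_{\mathcal{V}}(2)$. Since $F_{\mathcal{V}}$ \emph{preserves} (not merely weakly) this pullback, the element $s$ delivered by Proposition~\ref{prop:pointwise} is \emph{unique}: whenever $S,S'\in F_{\mathcal{V}}(2\times 2)$ satisfy $\bar{p}_{1}S=\bar{p}_{1}S'$ and $\bar{p}_{2}S=\bar{p}_{2}S'$, then $S=S'$.

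Next I would exhibit two such elements. Put $S:=t(e_{01},e_{10})$ and $S':=t(e_{00},e_{11})$. Applying the functor properties and (\ref{eq:homomorphic extension}) one computes $\bar{p}_{1}S=t(0,1)$ and $\bar{p}_{2}S=t(1,0)$, whereas $\bar{p}_{1}S'=t(0,1)$ and $\bar{p}_{2}S'=t(0,1)$. This is exactly the point at which commutativity of $t$ is used, and used only once: the hypothesis $t(1,0)\approx t(0,1)$ forces the second projections to agree, so that $S$ and $S'$ have identical images under both $\bar{p}_{1}$ and $\bar{p}_{2}$. By the uniqueness just recalled, I conclude $S=S'$, that is
\[
t(e_{01},e_{10})\approx t(e_{00},e_{11})\quad\text{in }F_{\mathcal{V}}(2\times 2).
\]

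Because $e_{01},e_{10},e_{00},e_{11}$ are four \emph{distinct} free generators, this equation in the free algebra is precisely the $\mathcal{V}$-identity $t(a,b)\approx t(c,d)$ in four independent variables. Specializing $a:=x$, $b:=y$, and $c:=d:=z$ yields $t(x,y)\approx t(z,z)$, which is the claimed pseudo-constancy. I expect the only genuine difficulty to be the first, non-computational step: recognizing that the correct test object is the kernel pair of $2\to 1$ and spotting the two preimages $t(e_{01},e_{10})$ and $t(e_{00},e_{11})$ whose projections coincide. Once these are in hand, the verification of the projections and the final substitution are entirely routine.
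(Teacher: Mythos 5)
Your proposal is correct and is essentially the paper's own argument: the paper cites this result but proves the stronger Proposition~\ref{prop:independence} by ``the same proof idea'', namely pulling back two constant maps $U\to\{x\}$, $V\to\{x\}$ (which for $|U|=|V|=2$ is exactly your kernel pair of $!\colon 2\to 1$) and comparing two candidate preimages whose projections are identified by commutativity. Your elements $t(e_{01},e_{10})$ and $t(e_{00},e_{11})$ are precisely the paper's $s_{2}=t(z_{x,y},z_{y,x})$ and $s_{1}=t(z_{x,x},z_{y,y})$ after renaming $0,1$ to $x,y$, and the concluding four-variable identity and substitution are the same.
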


The term $x\wedge y$ therefore witnesses that for every nontrivial
variety $\mathcal{L}$ of lattices, the free-lattice functor $F_{\mathcal{L}}$
does not preserve pullbacks.

Below, we shall need a stronger version of this proposition which,
however, builds on the same proof idea. Given an equation $t(u_{1},\ldots,u_{n})\approx t(v_{1},\ldots,v_{n})$,
we shall reuse our notation from the proof of Theorem \ref{thm:Lattice}
and introduce new variables $z_{u_{i},u_{i}}$ as well as $z_{u_{i},v_{i}}$
for $1\le i\le n$.
\begin{prop}
\label{prop:independence}If $F_{\mathcal{V}}$ preserves pullbacks
of epis, then each term $t$ satisfying an equation $t(u_{1},\ldots,u_{n})\approx t(v_{1},\ldots,v_{n})$
also satisfies 
\[
t(z_{u_{1},u_{1}},\ldots,z_{u_{n},u_{n}})\approx t(z_{u_{1},v_{1}},\ldots,z_{u_{n},v_{n}}).
\]
\end{prop}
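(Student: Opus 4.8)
The plan is to realise the two sides of the desired identity as two ancestor terms for one and the same pullback of epis, and then to read off their equality from the \emph{uniqueness} of the ancestor, which \emph{preservation} (rather than mere weak preservation) of pullbacks of epis supplies. This is the same mechanism that underlies Proposition~\ref{prop:commutative_term}.

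First I would build the pullback. Let $W$ be the set of all variables occurring in $u_1,\ldots,u_n,v_1,\ldots,v_n$, and let $\beta\colon W\twoheadrightarrow W/{\sim}$ be the quotient map for the equivalence $\sim$ generated by the relations $u_i\sim v_i$, $1\le i\le n$. Then $\beta$ is surjective, hence epi, and its kernel pair $Pb(\beta,\beta)=\ker\beta$ is a pullback of epis. Writing its elements as $z_{a,b}$, so that $\pi_1 z_{a,b}=a$, $\pi_2 z_{a,b}=b$ and $\beta a=\beta b$, every pair $z_{u_i,u_i}$ and $z_{u_i,v_i}$ lies in $\ker\beta$: indeed $\beta u_i=\beta u_i$ trivially, and $\beta u_i=\beta v_i$ because $u_i\sim v_i$. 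Consequently both
\[
s:=t(z_{u_1,u_1},\ldots,z_{u_n,u_n})\quad\text{and}\quad s':=t(z_{u_1,v_1},\ldots,z_{u_n,v_n})
\]
are genuine elements of $F_{\mathcal{V}}(\ker\beta)$.

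Next I would compute the images of $s$ and $s'$ under the two projections by means of the homomorphic-extension identity \eqref{eq:homomorphic extension}. Because every first index is some $u_i$, the $\bar\pi_1$-images coincide automatically,
\[
\bar\pi_1 s=t(u_1,\ldots,u_n)=\bar\pi_1 s'.
\]
For the second projection one gets $\bar\pi_2 s=t(u_1,\ldots,u_n)$ and $\bar\pi_2 s'=t(v_1,\ldots,v_n)$, and these are equal \emph{exactly} by the hypothesised equation $t(u_1,\ldots,u_n)\approx t(v_1,\ldots,v_n)$. Hence $s$ and $s'$ have the same image under $\bar\pi_1$ and the same image under $\bar\pi_2$.

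Finally I would invoke the preservation hypothesis. Since $F_{\mathcal{V}}$ preserves the pullback of epis $(\ker\beta,\pi_1,\pi_2)$, the cone $(F_{\mathcal{V}}(\ker\beta),\bar\pi_1,\bar\pi_2)$ is a genuine pullback of $(\bar\beta,\bar\beta)$, so the comparison map into $Pb(\bar\beta,\bar\beta)$ is a bijection; in particular an element of $F_{\mathcal{V}}(\ker\beta)$ is determined by its pair of projection images. As $s$ and $s'$ share both images, $s=s'$, which is the asserted identity. The one step demanding genuine care---and the place where the construction must be got right---is the choice of $\beta$: it has to be an epi so that the hypothesis applies, and it must identify $u_i$ with $v_i$ so that both families $(u_i,u_i)$ and $(u_i,v_i)$ satisfy the pullback equation. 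Keeping all first coordinates equal to $u_i$ then makes the $\bar\pi_1$-images agree for free, leaving the hypothesis to equate the $\bar\pi_2$-images; after that, uniqueness does everything.
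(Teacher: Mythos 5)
Your proof is correct, and its engine is exactly the paper's: over a pullback of epis containing the pairs $(u_i,u_i)$ and $(u_i,v_i)$, exhibit the two candidates $t(z_{u_1,u_1},\ldots,z_{u_n,u_n})$ and $t(z_{u_1,v_1},\ldots,z_{u_n,v_n})$, check that their $\bar\pi_1$-images agree automatically (all first coordinates are $u_i$) while their $\bar\pi_2$-images agree precisely by the hypothesised equation, and then conclude equality from the uniqueness that full (not merely weak) preservation supplies. The only divergence is the choice of the pullback diagram. The paper takes the two constant maps $\alpha\colon U\to\{x\}$ and $\beta\colon V\to\{x\}$, whose pullback is the entire product $U\times V$; no equivalence relation is needed and membership of the relevant pairs in the pullback is trivial. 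You instead take the kernel pair of the quotient map $\beta\colon W\twoheadrightarrow W/{\sim}$ with $\sim$ generated by $u_i\sim v_i$, which obliges you to verify that $(u_i,u_i)$ and $(u_i,v_i)$ lie in $\ker\beta$ (they do, by construction). Your variant buys a marginally sharper statement: it invokes only preservation of kernel pairs of epis rather than pullbacks of two distinct epis, in the spirit of Lemma~\ref{lem:kernel pairs}; the paper's variant is slightly shorter because its pullback is as large as possible and no relation has to be generated. Both arguments deliver the asserted equation verbatim, since in each case the variables $z_{u_i,u_i}$ and $z_{u_i,v_i}$ belong to the pullback serving as the generating set of the free algebra.
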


\begin{proof}
For $U=\{u_{1},\ldots,u_{n}\}$ and $V=\{v_{1},\ldots,v_{n}\}$ consider
the constant maps $\alpha:U\to\{x\}$ and $\beta:V\to\{x\},$ then
$Pb(\alpha,\beta)=U\times V$ and 
\[
\bar{\alpha}\,t(u_{1},\ldots,u_{n})=t(x,\ldots,x)=\bar{\beta}\,t(v_{1},\ldots,v_{n}).
\]
If $F_{\mathcal{V}}$ \emph{preserves} the pullback of $\alpha$ and
$\beta$, there ought be \emph{precisely one} term $s\in F_{\mathcal{V}}(Pb(\alpha,\beta))$
with $\bar{\pi}_{1}s=t(u_{1},\ldots,u_{n})$ and $\bar{\pi}_{2}s=t(v_{1},\ldots,v_{n}).$

However, we can present at least two candidates, namely

$s_{1}:=t((u_{1},u_{1}),\ldots,(u_{n},u_{n}))$ as well as $s_{2}:=t((u_{1},v_{1}),\ldots,(u_{n},v_{n}))$,
since 
\[
\bar{\pi}_{1}s_{1}\approx t(u_{1},\ldots,u_{n})\approx\bar{\pi}_{1}s_{2},
\]
and also 
\[
\bar{\pi}_{2}s_{1}\approx t(u_{1},\ldots,u_{n})\approx t(v_{1},\ldots,v_{n})\approx\bar{\pi}_{2}s_{2}.
\]
Hence 
\[
s_{1}=t((u_{1},u_{1}),\ldots,(u_{n},u_{n}))=t((u_{1},v_{1}),\ldots,(u_{n},v_{n}))=s_{2}.
\]
Recall that the elements $(u_{i},v_{j})\in Pb(f,g)$ act as variables
in $F_{\mathcal{V}}(Pb(f,g)),$ which we emphasize by writing $z_{u_{i},v_{j}}$
for the variable $(u_{i},v_{j})$ just like in the proof of Theorem
\ref{thm:Lattice}. Thus we infer the equation 
\[
t(z_{u_{1},u_{1}},\ldots,z_{u_{n},u_{n}})\approx t(z_{u_{1},v_{1}},\ldots,z_{u_{n},v_{n}}).
\]
\end{proof}
Full preservation of pullbacks seems to be an extremely strong condition
in the realm of free-algebra functors. We first demonstrate this for
permutable varieties. Given a Mal'cev term $m$ as in (\ref{eq:malcev1})
and (\ref{eq:malcev2}), then we can trivially infer the equation
\[
m(x,y,y)=m(y,y,x).
\]
Therefore, assuming that the free-algebra functor for a Mal'cev variety
$\mathcal{V}$ preserves pullbacks Prop. \ref{prop:independence}
yields the equation 
\[
m(z_{x,x},z_{y,y},z_{y,y})=m(z_{x,y},z_{y,y},z_{y,x}),
\]
which after renaming of variables can be written as 
\[
m(x,y,y)\approx m(u,y,v),
\]
thereby expressing the fact that $m$ must be independent of its first
and third index. With the help of either (\ref{eq:malcev1}) or (\ref{eq:malcev2}),
this implies $m(x,y,z)\approx m(y,y,y)\approx y,$ showing that $m$
is a projection operation, which then contradicts both (\ref{eq:malcev1})
and (\ref{eq:malcev2}). We conclude:
\begin{cor}
For each Mal'cev variety $\mathcal{V}$ the free-algebra functor $F_{\mathcal{V}}$
does not preserve epi-pullbacks, even though it does preserve them
weakly.
\end{cor}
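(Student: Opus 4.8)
The plan is to establish the two assertions separately, since they pull in opposite directions. The \emph{weak} preservation is a direct citation: a Mal'cev variety is 2-permutable, so Prop.~\ref{prop:Malcev}(\ref{enu:2-perm_pres_ker_pairs}) gives that $F_{\mathcal{V}}$ weakly preserves kernel pairs, and Lemma~\ref{lem:kernel pairs} identifies weak preservation of kernel pairs with weak preservation of pullbacks of epis. All the real work therefore goes into the negative statement, which I would prove by contradiction, assuming that $F_{\mathcal{V}}$ \emph{preserves} epi-pullbacks and deriving that the Mal'cev term must degenerate to a projection.

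First I would fix a ternary Mal'cev term $m$ satisfying (\ref{eq:malcev1}) and (\ref{eq:malcev2}), and manufacture an equation of the shape required by Prop.~\ref{prop:independence}. Since $m(x,y,y)\approx x$ by (\ref{eq:malcev1}) and $m(y,y,x)\approx x$ by (\ref{eq:malcev2}) (reading the latter with its variables renamed), the two left-hand sides coincide, yielding $m(x,y,y)\approx m(y,y,x)$. This is precisely an equation $t(u_{1},u_{2},u_{3})\approx t(v_{1},v_{2},v_{3})$ with $t=m$, $(u_{1},u_{2},u_{3})=(x,y,y)$ and $(v_{1},v_{2},v_{3})=(y,y,x)$, so Prop.~\ref{prop:independence} is applicable.

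The key step is to feed this equation into Prop.~\ref{prop:independence}. Under the preservation hypothesis it returns $m(z_{x,x},z_{y,y},z_{y,y})\approx m(z_{x,y},z_{y,y},z_{y,x})$; note that the two middle entries are the \emph{same} variable $z_{y,y}$, whereas the first and third entries differ. Renaming the four distinct coordinate-pair variables, this reads $m(a,c,c)\approx m(b,c,d)$ with $a,b,c,d$ independent, i.e.\ the value of $m$ is unaffected by its first and third arguments. I expect the main obstacle to be purely one of careful bookkeeping: from the subscripts dictated by $(u_{1},u_{2},u_{3})$ and $(v_{1},v_{2},v_{3})$ one must correctly read off which coordinate-pair variable collapses (the middle one, $z_{y,y}$) and which survive as genuinely free variables, as any slip here produces a vacuous or a false conclusion.

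Once independence of the outer two arguments is in hand, the conclusion is quick. Fixing the middle argument to $y$ and collapsing the outer ones gives $m(x,y,z)\approx m(y,y,y)$, and (\ref{eq:malcev1}) specializes to $m(y,y,y)\approx y$, so $m(x,y,z)\approx y$; thus $m$ is the projection onto its middle coordinate. But in any nontrivial variety this contradicts (\ref{eq:malcev1}) (and equally (\ref{eq:malcev2})), since it would force $x\approx m(x,y,y)\approx y$. This contradiction rules out preservation of epi-pullbacks and completes the argument.
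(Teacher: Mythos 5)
Your proof is correct and follows essentially the same route as the paper: both derive $m(x,y,y)\approx m(y,y,x)$ from (\ref{eq:malcev1}) and (\ref{eq:malcev2}), feed it into Prop.~\ref{prop:independence} to conclude that $m$ is independent of its first and third arguments and hence is the second projection, yielding the same contradiction, while the weak-preservation half is the same citation of Prop.~\ref{prop:Malcev}. Your explicit ``nontrivial variety'' caveat in the final step is a small clarification that the paper leaves implicit, but the argument is otherwise identical.
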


Finally, we test Proposition \ref{prop:independence} on arbitrary
idempotent varieties. Recall that a variety $\mathcal{V}$ is called
\emph{idempotent}, when each fundamental operation $f$ satisfies
$f(x,\ldots,x)\approx x.$

It has been shown in \cite{Gumm20a} that for idempotent varieties
without constants the free-algebra functor $F_{\mathcal{V}}$ weakly
preserves products and pullbacks of constant maps. The following theorem
shows that nontrivial idempotent varieties will never (fully) preserve
pullbacks:
\begin{thm}
The only idempotent variety $\mathcal{V}$ for which $F_{\mathcal{V}}$
preserves pullbacks of epis, contains the ``variety of sets'' (where
all operations are implemented as projections).
\end{thm}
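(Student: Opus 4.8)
The plan is to argue contrapositively, entirely at the pointwise level of Prop.~\ref{prop:pointwise}, reusing the two-candidate device from the proof of Prop.~\ref{prop:independence} but applied to a \emph{composite} term. It suffices to show that every term operation of $\mathcal{V}$ is a projection, for then the clone of $\mathcal{V}$ consists of projections only and $\mathcal{V}$ is term-equivalent to the variety of sets, which is the asserted conclusion. A term $t$ that essentially depends on a single variable is, by idempotence, already a projection: writing $t(x_{1},\ldots,x_{n})\approx w(x_{k})$ for a unary term $w$, idempotence forces $w(x)\approx x$. Hence, under the hypothesis that $F_{\mathcal{V}}$ preserves epi-pullbacks, I only have to rule out an idempotent term $t(x_{1},\ldots,x_{n})$ that depends essentially on at least two of its variables; I fix two distinct essential arguments $i_{0}\neq j_{0}$.

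I will then produce a concrete epi-pullback on which uniqueness fails. Take $X=\{x_{1},\ldots,x_{n}\}$, the constant map $\alpha\colon X\to\{w\}$, and its kernel pair, whose vertex is $\ker\alpha=X\times X$ with variables $z_{i,j}$ obeying $\pi_{1}z_{i,j}=x_{i}$ and $\pi_{2}z_{i,j}=x_{j}$; this is the pullback of the epi $\alpha$ with itself. Put $p:=q:=t(x_{1},\ldots,x_{n})$. Since $t$ is idempotent, $\bar{\alpha}\,p=t(w,\ldots,w)=w=\bar{\alpha}\,q$, so the situation falls under Prop.~\ref{prop:pointwise}; weak preservation yields an ancestor $s\in F_{\mathcal{V}}(X\times X)$ with $\bar{\pi}_{1}s=p$ and $\bar{\pi}_{2}s=q$, and \emph{full} preservation forces it to be \emph{unique}. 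I now exhibit two such ancestors. The diagonal one is $s_{d}:=t(z_{1,1},z_{2,2},\ldots,z_{n,n})$, with $\bar{\pi}_{1}s_{d}=\bar{\pi}_{2}s_{d}=t(x_{1},\ldots,x_{n})$. The second is the doubled term $s:=t\bigl(t(z_{1,1},\ldots,z_{1,n}),\ldots,t(z_{n,1},\ldots,z_{n,n})\bigr)$: applying $\bar{\pi}_{1}$ collapses each inner block $t(z_{i,1},\ldots,z_{i,n})$ to $t(x_{i},\ldots,x_{i})=x_{i}$, so $\bar{\pi}_{1}s=p$, while applying $\bar{\pi}_{2}$ turns every inner block into $t(x_{1},\ldots,x_{n})=q$, after which the idempotent outer $t$ gives $t(q,\ldots,q)=q$. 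Thus $s$ is an ancestor as well.

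The final step, which I expect to be the main obstacle, is to verify rigorously that $s\neq s_{d}$ as elements of the free algebra. The point is that the off-diagonal variable $z_{i_{0},j_{0}}$ (recall $i_{0}\neq j_{0}$) occurs in $s$ exactly once, namely in the $i_{0}$-th inner block in its $j_{0}$-th argument slot; since $t$ depends essentially on its $j_{0}$-th argument (inner copy) and on its $i_{0}$-th argument (outer copy), and since this single occurrence admits no canceling partner, $s$ depends essentially on $z_{i_{0},j_{0}}$, whereas $s_{d}$ involves only diagonal variables $z_{k,k}$. The delicate part is exactly this non-cancellation of essential dependence through the composite $t\circ(t,\ldots,t)$, which has to be justified at the level of normal forms in $F_{\mathcal{V}}(X\times X)$ rather than by a generic clone-composition argument; everything else is a routine idempotence calculation. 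Once $s\neq s_{d}$ is established, uniqueness is contradicted, so no idempotent term can depend essentially on two variables. Therefore every operation of $\mathcal{V}$ is a projection, and $\mathcal{V}$ is the variety of sets.
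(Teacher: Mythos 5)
Your setup is fine as far as it goes: the kernel pair of the constant map is a legitimate epi-pullback, and your calculations showing that both the diagonal term $s_{d}=t(z_{1,1},\ldots,z_{n,n})$ and the doubled term $s=t\bigl(t(z_{1,1},\ldots,z_{1,n}),\ldots,t(z_{n,1},\ldots,z_{n,n})\bigr)$ are common $\bar{\pi}_{1}$-/$\bar{\pi}_{2}$-preimages of $(p,q)$ are correct routine uses of idempotence. But the step you flag as the main obstacle --- proving $s\neq s_{d}$ --- is not merely delicate; it is false in general, because essential dependence does not survive composition. Take $\mathcal{V}$ to be the variety of rectangular bands (idempotent semigroups satisfying $xyx\approx x$, hence $xyz\approx xz$). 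The multiplication $t(x,y)=xy$ is idempotent and depends essentially on both arguments, yet
\[
(z_{1,1}z_{1,2})(z_{2,1}z_{2,2})\approx z_{1,1}z_{2,2},
\]
so your two candidates coincide in $F_{\mathcal{V}}(X\times X)$ and no violation of uniqueness arises. The off-diagonal variable $z_{1,2}$ occurs exactly once in $s$, the outer $t$ depends essentially on its first argument and the inner $t$ on its second, and nevertheless the occurrence is cancelled by the equations of the variety; the ``no canceling partner'' heuristic is purely syntactic and the variety's identities can defeat it.

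The same example shows that the statement you reduced to --- \emph{every} term operation of $\mathcal{V}$ is a projection --- is strictly stronger than the theorem and is actually false, so no refinement of the normal-form analysis can close the gap. The free rectangular band on $X$ is $X\times X$ with $(a,b)(c,d)=(a,d)$ and generators embedded diagonally; thus $F_{\mathcal{V}}\cong\mathrm{Hom}(\{1,2\},-)$ is representable and preserves \emph{all} pullbacks, in particular epi-pullbacks, even though its multiplication is not a projection. This is consistent with the theorem, because rectangular bands \emph{contain} the variety of sets (interpret multiplication as first projection); the theorem's conclusion is this containment of the projection variety, not term-equivalence to it, and your first paragraph silently conflates the two. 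This is also exactly why the paper's proof has the shape it does: it assumes, for contradiction, that some identity of $\mathcal{V}$ is \emph{not} satisfiable by projections, invokes Ol\v{s}\'{a}k's theorem to extract the six-ary term $t$ with its three balanced identities, and then applies Prop.~\ref{prop:independence} twice to force $t$ to be a pseudo-constant, whence $x\approx y$ by idempotence. To repair your argument you would at minimum have to restrict to varieties not containing the variety of sets and then explain why, in \emph{that} setting, doubling a suitable term cannot collapse --- and it is hard to see how to do this without an external result of Ol\v{s}\'{a}k's kind.
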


\begin{proof}
We employ a result of Olšák\cite{Olsak18} stating that any idempotent
variety satisfying at least one nontrivial equation (not satisfied
in the variety of sets) must have a six-ary term $t$ satisfying the
equations 
\[
t(x,y,y,y,x,x)\approx t(y,x,y,x,y,x)\approx t(y,y,x,x,x,y).
\]
Applying Prop.~\ref{prop:independence}, while renaming variables
$z_{x,x},z_{y,y},z_{x,y},z_{y,x}$ into $x,y,z,u$ we obtain from
the first equation the new 
\begin{equation}
t(x,y,y,y,x,x)\approx t(z,u,y,u,z,x).\label{eq:ol3}
\end{equation}
Hence also 
\[
t(y,y,x,x,x,y)\approx t(z,u,y,u,z,x),
\]
from which a second application of Prop. \ref{prop:independence}
with $z_{y,z},z_{y,u},z_{x,u},z_{x,z}$ renamed into variables $a,b,c,d$
yields the equation 
\begin{equation}
t(y,y,x,x,x,y)\approx t(a,b,z,c,d,u),\label{eq:o4}
\end{equation}
which clearly shows that $t$ is independent of any of its arguments,
so $t$ defines a pseudo constant. Hence by idempotency 
\[
x\approx t(x,x,x,x,x,x)\approx t(y,y,y,y,y,y)\approx y.
\]
\end{proof}

\section{Conclusion}

We have shown that every balanced equation $p(u_{1},\ldots,u_{m})=q(v_{1},\ldots,v_{n})$
in free lattice-ordered algebras can be derived from the fact that
$p$ and $q$ can be obtained by variable identification from a common
ancestor term $s,$ and the mentioned equation arises by further identifying
variables until a syntactically identical term is achieved. In category
theoretical language this means that the free algebra functor weakly
preserves pullbacks of epis.

\bibliographystyle{plain}
\bibliography{infobib}

\end{document}